\documentclass[11pt,a4paper]{article}

\pdfoutput=1

\usepackage{amsmath,amsfonts,amssymb,amsthm,cite}

\usepackage{graphicx}
\usepackage{xspace}

\textheight=22.5cm
\textwidth=16.cm
\oddsidemargin=0cm
\evensidemargin=\oddsidemargin
\topmargin=0pt
\advance\topmargin by -\headheight
\advance\topmargin by -\headsep

\newcommand{\fetch}[1]{\includegraphics[height=2.5cm]{#1}}
\newcommand{\LEGO}{\textsf{LEGO}\xspace}

\newtheorem{thm}{Theorem}[section]
\newtheorem{lem}[thm]{Lemma}

\newtheorem{cly}[thm]{Corollary}
\newtheorem{prop}[thm]{Proposition}

\theoremstyle{definition}
\newtheorem{defn}[thm]{Definition}

\theoremstyle{remark}
\newtheorem{rem}[thm]{Remark}

\numberwithin{equation}{section}        


\renewcommand{\a}{\alpha}               







\newcommand{\comment}[1]{\textsf{#1}}   

\begin{document}

\thispagestyle{empty}
\quad

\vspace{2cm}
\begin{center}

\textbf{\huge Enumeration of pyramids of one-dimensional
pieces of arbitrary fixed integer length}  

\vspace{1.5cm}

{\large Bergfinnur Durhuus$^{1}$ and S{\o}ren Eilers$^{2}$} \\

\vspace{.5cm} 

Department of Mathematical Sciences, Copenhagen University\\
Universitetsparken 5\\
DK-2100 Copenhagen {\O}, Denmark

\vspace{1cm}

{\large\textbf{Abstract}}\end{center} 
We consider pyramids made of one-dimensional pieces of
fixed integer length $a$ and which may have pairwise overlaps of integer
length from $1$ to $a$. We prove that the number of pyramids of size
$m$, i.e. consisting of $m$ pieces, equals ${am-1\choose
m-1}$ for each $a\geq 2$. This generalises a well known result for $a=2$.
 A bijective correspondence between so-called right (or left) pyramids
and $a$-ary trees is pointed out, and it is shown that asymptotically
the average width of pyramids is proportional to the square root of the size.
 
\vspace{1.5cm}



\noindent\emph{Key words:}~ Heaps, pyramids, polyominoes, lattice
animals, enumeration, trees, Dyck paths, LEGOs.  

\vspace{3cm}

\noindent  
$^1$ durhuus@math.ku.dk\\
$^2$ eilers@math.ku.dk

\newpage

\section{Introduction} 

Solutions of the enumeration problem for a variety of \emph{lattice
animals} have been obtained in recent years, although the
problem concerning general animals on the square or the triangular
lattice both remain unsolved. The problems considered in this area are
interesting in their own right from a combinatorial point of view,
some of them being equivalent to
other well known combinatorial problems, and frequently they are
inspired by concrete problems in other fields. Thus, for the standard model of
site percolation on a lattice \cite{grim}, the connected percolation
clusters are lattice 
animals and their combinatorial properties of imminent importance for
the critical properties of the model \cite{congutt}. Viewing the points in a
lattice as centres of the elementary cells of the dual lattice, an
animal can be identified with an edge-connected set of elementary cells on
the dual lattice, also called a \emph{polyomino}. Thus, enumerating
lattice animals and polyominoes amounts to the same problem, although
the motivation for studying a certain class of animals and its 
polyomino counterpart may be quite different. Early enumeration
results for polyominoes can be found in \cite{klarner,temp}. For more
recent results see \cite{bousrech,zeil} and references given there. 

A particular, much studied, problem is that of directed animals 
on a square lattice, first solved by Dhar in 
\cite{dhar}, and later by a number of authors using different methods
\cite{haknad,gouvien,pen,betpen1,betpen2,bdel,shap,bousrech}. A
directed animal on the square lattice is a set of points on the
lattice such that any point in the 
set is the end point of a lattice path starting at the origin all
of whose steps are directed towards either east or north and all of
whose points are contained in the set. The perhaps most elegant
solution to this problem \cite{bousrech} is obtained by observing
\cite{viennot2} that if the 
lattice is rotated through an angle $\pi/2$ counterclockwise and each
point in a directed animal is replaced by a suitable dimer, one
obtains a pyramid of dimers, the detailed definition of which is given
below, restricted such that no dimer is placed directly on top of another. 
The generating function for such pyramids can then be obtained rather
simply as a solution of an algebraic equation. In turn, the solution to
the directed animal problem on the triangular lattice can be obtained
using that those animals correspond to general pyramids of dimers
\cite{viennot2}.  

In this paper we address the problem of enumerating pyramids whose pieces
are of fixed but arbitrary integer length $a$ instead of dimers
(which correspond to $a=2$). These may, of course, be viewed as a
particular type of polyominoes on the square lattice, or one may think
of them as connected, planar LEGOs made of $1\times a$-pieces and
which are obtained by dropping successively pieces from above so that
the resulting configuration is connected.  

More precisely, we shall consider heaps in the sense of
Viennot \cite{viennot} whose basic pieces are one-dimensional and have
fixed integer length $a$ and whose concurrency relation is defined by
assuming each piece to be an interval $]s,s+a[, s\in\mathbb Z$, and
two intervals $\alpha, \beta$ are concurrent if and only if
$\alpha\cap\beta\neq\emptyset$. Thus a \emph{heap}, in this article,
can be thought of as being obtained by dropping a 
finite number of pieces towards a horizontal axis. Recall that
a heap is a \emph{pyramid} if it has a unique bottom piece. We call a
pyramid $p$ a \emph{right $s$-pyramid}, if the bottom piece covers
the interval $]s,s+a[$ and is a leftmost piece in $p$. Similarly, $p$
is a \emph{left $s$-pyramid} if the bottom piece covers
the interval $]s-a,s[$ and is a rightmost piece in $p$.

When using the term pyramid it will henceforth be
assumed, unless otherwise stated, that its bottom piece covers the
interval $]0,a[$. The number of pieces in a pyramid $p$ will be called its
\emph{size} and is denoted by $|p|$.

The main result of this note is the following.

\begin{thm}
\label{thm}
 Given $a\geq 2$, the number of pyramids of size $m$ equals
$am-1\choose m-1$.
\end{thm}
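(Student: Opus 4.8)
The plan is to pass to generating functions and reduce the count of all pyramids to that of right pyramids. Write $P(x)=\sum_{m\ge 1}p_m x^m$ and $R(x)=\sum_{m\ge 1}r_m x^m$, where $p_m$ (resp. $r_m$) is the number of pyramids (resp. right pyramids) of size $m$ with bottom piece $]0,a[$. The first ingredient is the functional equation $R=x(1+R)^a$. It reflects the $a$-fold branching of a right pyramid: since its bottom piece $b$ is leftmost, the pieces that can rest directly on $b$ occupy exactly the $a$ slots $]j,j+a[$ with $0\le j\le a-1$, and the subheap growing out of each slot is itself a right pyramid (possibly empty). Equivalently, the bijection with $a$-ary trees announced in the abstract identifies $r_m$ with the Fuss--Catalan numbers and makes $B:=1+R$ the generalized binomial series, $B=1+xB^a$.

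The heart of the argument is to decompose an arbitrary pyramid as a leftward chain of right pyramids. Given a pyramid $p$ with bottom $b=\,]0,a[$, I would let $b'$ be the lowest piece that rests on $b$ and overlaps it strictly from the left, so $b'$ covers $]-j,a-j[$ for a unique $j\in\{1,\dots,a-1\}$ (the left-overlapping pieces over $b$ pairwise overlap, hence form a chain in the heap order, so such a lowest $b'$ is well defined when it exists). Set $\rho(p)$ to be the pieces of $p$ that do \emph{not} lie above $b'$ in the heap order; this should be a right pyramid rooted at $b$, while the pieces lying on $b'$ and above form a strictly smaller pyramid with bottom $b'$, which after reflection becomes a pyramid on $]0,a[$ to which the construction is iterated. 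The outcome should be a bijection between pyramids of size $m$ and sequences $(\rho_0,j_1,\rho_1,\dots,j_k,\rho_k)$ of right pyramids $\rho_i$ interspersed with overlap labels $j_i\in\{1,\dots,a-1\}$, of total size $m$, and hence
\begin{equation*}
P=R\bigl(1+(a-1)P\bigr),\qquad\text{equivalently}\qquad P=\frac{R}{1-(a-1)R}.
\end{equation*}

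The main obstacle is making this decomposition rigorous. A naive geometric cut---declaring every piece at a nonnegative position to belong to $\rho(p)$---fails, because a piece at position $\ge 0$ may sit in the heap above the left-overlapping piece $b'$ and therefore belong to the remainder; the split must be phrased entirely through the heap order, and one must check that $b'$ is well defined, that $\rho(p)$ really is an independent right pyramid, that the remainder is an independent pyramid, and that the assignment is invertible. I expect essentially all the work to lie here. Note that the factor $a-1$ is exactly the number of ways the bottom of the remainder can overlap $b$ from the left, which is why it disappears in the classical dimer case $a=2$, where $P=R/(1-R)$.

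Finally I would extract coefficients. Substituting $R=B-1$ and $xB^a=R$ into $P=R/(1-(a-1)R)$ gives
\begin{equation*}
P=x\,\frac{B^{\,a-1}}{1-a+a\,B^{-1}},
\end{equation*}
so the power identity for the generalized binomial series (with exponent $a-1$) yields
\begin{equation*}
[x^m]P=\binom{am-1}{m-1},
\end{equation*}
as claimed; alternatively, Lagrange inversion applied to $R=x(1+R)^a$ evaluates $[x^m]\,R/(1-(a-1)R)$ to the same binomial coefficient. Specializing to $a=2$, where the chain carries no labels and $P=R/(1-R)$ reproduces $\binom{2m-1}{m-1}$, confirms the normalization and recovers the known dimer result.
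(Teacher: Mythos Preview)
Your proposal is correct and takes a genuinely different route from the paper, and in fact a shorter one.

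Both you and the paper share the first two ingredients: the functional equation $R=x(1+R)^a$ (the paper's (3.2), obtained there via generalised Dyck paths and the $a$-ary tree bijection in Proposition~3.5), and the decomposition of an arbitrary pyramid into a labelled chain of right/left pyramids, yielding $P=R/(1-(a-1)R)$ (the paper's Lemma~3.3 and (4.3)). Your sketch of this decomposition is essentially the paper's, modulo phrasing; one small slip is your direct justification of $R=x(1+R)^a$ by ``$a$-fold branching'': the $a$ slots $]j,j+a[$ above $b$ pairwise overlap, so at most one piece rests \emph{directly} on $b$, and the sub-heaps in those slots are not independent. Your fallback to the $a$-ary tree bijection repairs this.

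The real divergence is in how to extract $[x^m]P$. The authors explicitly dismiss the generating-function route for $a>2$ (see the Remark after Lemma~4.1: ``such a procedure does not seem feasible''), and instead build a second decomposition: they show that walks of length $am$ starting with a right-step and ending at $0$ -- which trivially number $\binom{am-1}{m-1}$ -- admit an admissible factorisation into positive/negative sub-walks whose multiplicities $a_r$ they prove equal $(a-1)^{r-1}$ via a transfer-matrix computation (Lemmas~3.8 and~4.1). Matching this against (4.1) gives the theorem without ever knowing $A_m$ or solving for $R$.

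Your extraction bypasses all of this. With $\phi(u)=(1+u)^a$ the Lagrange--B\"urmann form
\[
[x^m]\,F(R)\;=\;[u^m]\,F(u)\,\phi(u)^m\Bigl(1-\tfrac{u\phi'(u)}{\phi(u)}\Bigr)
\;=\;[u^m]\,F(u)\,(1+u)^{am-1}\bigl(1-(a-1)u\bigr)
\]
makes the denominator of $F(u)=u/(1-(a-1)u)$ cancel against the Jacobian factor, leaving $[u^{m-1}](1+u)^{am-1}=\binom{am-1}{m-1}$ in one line; equivalently, this is the Graham--Knuth--Patashnik identity for $\mathcal B_a^{\,r}/(1-a+a\mathcal B_a^{-1})$. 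What this buys you is brevity and a purely analytic endgame; what the paper's route buys is a bijective explanation of why pyramids and the ``obvious'' $\binom{am-1}{m-1}$ walks are equinumerous, at the cost of the walk-decomposition Lemma~3.8 and the matrix Lemma~4.1.
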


The result is well known for $a=2$ \cite{bousrech}.
We reconsider this case in Section 2 for the purpose of  illustrating
our method of argument which, in particular, involves establishing a bijective
correspondence between pyramids of size $m$ and closed walks on the integers
of length $m$. In Sections 3 and 4 we
generalise in two steps this correspondence to the case $a\geq 3$ from
which the main result will follow.  It is worth noting that to obtain
this result we do not
rely on generating function techniques whose applicability seems to be
restricted to the dimer case. Those techniques, on the other hand, are
used to determine the asymptotic
behaviour of the average width of pyramids of large size in
Proposition~\ref{averasymp}. We also
point out a bijective correspondence between right pyramids of size $m$ and
$a$-ary trees with $m$ nodes.  In Section 5 we conclude with some
numerical results and comments concerning the
growth rate of the number of general planar LEGOs as a function of 
size and on the dependence of the exponential growth constant on the
size of the building blocks.

\section{The dimer case}
\label{sec2}

 In this section we assume $a=2$. Hence the pieces in this case can be
thought of as dimers. 

We first note the following decomposition property.

\begin{lem}
\label{decomp1}
There is a bijective correspondence between pyramids of size $m\geq 1$ and
sequences $(p_1,p_2,\dots,p_r)$ of pyramids such that $p_i$ is a right
$0$-pyramid if $i$ is odd and a left $1$-pyramid if $i$ is even, and
such that $|p_1|+\dots +|p_r|=m$.
\end{lem}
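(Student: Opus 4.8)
The plan is to realise the claimed bijection through a single distinguished vertical column of the pyramid and to peel off one half-pyramid at a time. Fix the point $1/2$, which lies in the interior of both admissible bottom intervals $]0,2[$ and $]-1,1[$. First I would observe that the pieces of a pyramid $p$ containing $1/2$ pairwise overlap, hence are pairwise concurrent, and are therefore totally ordered by height in the heap; write them $c_1,\dots,c_t$ from bottom to top. Since a dimer $]s,s+2[$ contains $1/2$ only for $s\in\{-1,0\}$, each $c_j$ sits at position $0$ or $-1$, and $c_1$ is the unique bottom piece, at position $0$. Cutting $c_1,\dots,c_t$ into maximal runs of constant position yields $r$ runs whose positions alternate $0,-1,0,\dots$, precisely because $c_1$ is at position $0$; this integer $r$ will be the length of the output sequence.

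Next I would use a horizontal-confinement property of the two building blocks. A right $0$-pyramid has all its pieces at positions $\geq 0$, so it lives in $x\geq 0$, while a left $1$-pyramid has all pieces at positions $\leq -1$ and lives in $x\leq 1$. The two regions meet only in the strip $0<x<1$, and inside that strip the only pieces that can occur are those at position $0$ (from the right block) or $-1$ (from the left block) — that is, exactly the column pieces. I would exploit this to define the forward map $\Psi$: assign to the $i$-th run the half-pyramid $p_i$ consisting of that run together with all pieces of $p$ hanging off it on the appropriate side, and check that $p_i$ is a right $0$-pyramid for $i$ odd and a left $1$-pyramid for $i$ even, with $\sum_i|p_i|=|p|$.

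The inverse map $\Phi$ is the natural composition: starting from $p_1$, drop each subsequent $p_i$ from high above as a rigid block onto the current configuration. By the confinement property the only obstruction $p_{i+1}$ can meet is the top of the central column, so its bottom comes to rest directly on the current column top, lengthening the column by one run while the remainder of $p_{i+1}$ falls freely to the correct side; the result is again a pyramid with bottom $]0,2[$. I would then verify $\Psi\circ\Phi=\mathrm{id}$ and $\Phi\circ\Psi=\mathrm{id}$, most cleanly by induction on $r$ (or on $|p|$), peeling the top half-pyramid $p_r$ off $p$ and reducing to a shorter sequence.

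The main obstacle I expect is the geometric non-interference claim underlying both maps: that the pieces of $p$ genuinely partition along the column, so that each run carries a half-pyramid of the correct handedness, with no piece of an even (left) block slipping below the column top of the preceding odd (right) block and no stray position-$\geq 0$ chain bridging two different position-$0$ runs around an intervening position-$-1$ run. Establishing this amounts to showing that all interaction between consecutive blocks is forced into the strip $0<x<1$, hence onto the column itself; once that is pinned down, the alternation of types, the additivity of sizes, and the mutual inverseness all follow, and the lemma is proved.
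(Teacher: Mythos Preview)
Your approach is correct and is essentially the same decomposition as the paper's: the paper iteratively peels off the bottom half-pyramid by locating the lowest piece over $]-1,1[$ (respectively $]0,2[$) and splitting via Viennot's $\odot$ operation, while you identify the entire central column over $1/2$ at once, cut it into alternating runs, and read off all the $p_i$ simultaneously. The underlying bijection is identical; your presentation is simply more explicit about the geometry (the confinement to $x\geq 0$ and $x\leq 1$, the non-interference across the strip $]0,1[$) and about the inverse map, whereas the paper delegates those details to the $\odot$ formalism.
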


\begin{proof} If the pyramid $p$ is not a right $0$-pyramid there is a
lowest piece in $p$ above the interval $]-1,1[$ and this piece is
the bottom piece of a unique proper sub-pyramid $p'$ and we can write 
$p=p_1\odot p'$, with notation as in \cite{viennot}, where $p_1$ is a
right $0$-pyramid. If $p'$ is not a left $1$-pyramid it contains a
unique lowest
piece above the interval $]0,2[$ and we have $p'=p_2\odot p''$, where
$p_2$ is a left $1$-pyramid. Repeating the argument the claim follows.
See also \cite{bousrech} for a similar decomposition.
\end{proof}

\begin{defn}
\label{postring}
A finite sequence of $0$'s and $1$'s will be called a \emph{string}
and by a \emph{$(n,m)$-string} we mean a string of length $n$ with $m$
$1$'s. A $(2m,m)$-string $x_1x_2\dots x_{2m}$ is called
\emph{positive} if 
$$
t_s \equiv \sum_{u=1}^s (2x_u -1) 
$$ 
is non-negative for all $s=1,\dots,2m$, i.e. the number of $0$'s in
$x_1\dots x_s$ at most equals the number of $1$'s in $x_1\dots x_s$ for
each $s$.
\end{defn}

Note that a positive $(2m,m)$-string necessarily begins with a $1$ and
ends with a $0$. There is a natural correspondence between strings and
nearest neighbouring walks on the integers starting at $0$ where each
$0$ corresponds to a left-step and each $1$ to a right-step. 
Positive strings then correspond to walks on the non-negative integers starting at $0$.

\begin{lem}
\label{stringpyr1}
There is a bijective correspondence between positive $(2m,m)$-strings
and right $0$-pyramids.
\end{lem}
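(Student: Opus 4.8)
The plan is to prove the correspondence by induction on the size $m$, matching the recursive (``first return'') structure of positive strings with a geometric decomposition of right $0$-pyramids. On the string side I would first record the standard factorization: every non-empty positive $(2m,m)$-string $w$ can be written uniquely as $w=1\,u\,0\,v$, where $1u0$ is the initial segment of $w$ up to the first return of the associated walk to level $0$, and where $u,v$ are again positive strings of strictly smaller length. This is just the observation that the leading $1$ must eventually be balanced by a first return to $0$, after which the remainder is an independent positive string; it yields the Catalan recursion and matches the single positive $(2,1)$-string $10$ with the one-piece pyramid.

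On the pyramid side I would produce a matching decomposition. Given a non-empty right $0$-pyramid $p$ with bottom piece $b$ (covering $]0,2[$ since $a=2$), the key structural fact to establish is that, when $|p|\ge 2$, there is a \emph{unique} piece $c$ resting directly on $b$, and that $c$ covers either $]0,2[$ (sitting squarely on $b$) or $]1,3[$ (offset one step to the right); a piece offset to the left is excluded because $b$ is leftmost, and two distinct pieces resting on $b$ would have to overlap. Removing $b$ leaves a pyramid $q$ with bottom $c$. I would then split $p$ into an ordered pair $(p_L,p_R)$ of possibly empty right $0$-pyramids: informally, $p_R$ collects the material propagating up and to the right of $b$ and $p_L$ the material stacked over the column of $b$, each recentred so that its bottom again covers $]0,2[$. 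Writing $\beta$ for the bijection to be constructed, one sets $\beta(p)=1\,\beta(p_L)\,0\,\beta(p_R)$, so that positivity of the output is automatic and the two symbols recorded for each piece are exactly an up-step and its matching down-step; the induction hypothesis applies since $|p_L|+|p_R|=m-1$.

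The main obstacle is making this decomposition genuinely well-defined and invertible. The difficulty is that the heap order cannot be read off from the columns alone: a piece may rest on a neighbour across an empty column (a staircase step), so a naive split ``column $0$ versus columns $\ge 1$'' fails to produce connected sub-pyramids and, worse, loses the relative heights needed to glue the pieces back together. I would therefore define the split through the covering (support) relation of the heap rather than through horizontal position, and then prove three things: that $p_L$ and $p_R$ are each connected with a unique bottom piece, hence right $0$-pyramids after recentring; that $p\mapsto(p_L,p_R)$ is a bijection onto ordered pairs with total size $m-1$, with an explicit inverse that restacks $p_L$ over the column of $b$ and superposes $p_R$ to the right using the operation $\odot$ of \cite{viennot}; and that this matches the first-return factorization of strings under $\beta$.

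An alternative, essentially equivalent, route is to factor $\beta$ through the bijection between right $0$-pyramids and binary trees (the $a=2$ case of the $a$-ary tree correspondence announced in the Introduction) composed with the classical encoding of binary trees by Dyck paths. The crux there is the same delicate point: one must define the branching of the tree from the support relation in such a way that precisely the legal heaps arise. In either formulation I expect the routine part to be the induction and the bookkeeping of $\beta$, and the genuinely technical part to be verifying that the structural split respects connectivity and reconstructs the relative levels uniquely.
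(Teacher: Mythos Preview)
Your route is genuinely different from the paper's, which does not argue by induction or by any first-return split. The paper gives a single direct algorithm: starting from a right $0$-pyramid $p$ it maintains a cursor $t_s$ and a growing sub-pyramid $p^{(s)}\subseteq p$, and at step $s+1$ asks whether a piece of $p$ over $]t_s,t_s+1[$ can be dropped onto $p^{(s)}$ to produce a sub-pyramid of $p$; if so it records $x_{s+1}=1$ and adds that (rightmost available) piece, otherwise it records $0$. The invariant, checked along the way, is that every piece of $p\setminus p^{(s)}$ currently droppable lies over some unit interval contained in $]0,t_s+1[$; this forces $t_s\ge 0$ throughout, hence positivity of the output string, and makes the inverse transparent (drop a piece over $]t_{s-1},t_{s-1}+1[$ at each $1$). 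No decomposition of $p$ into two sub-pyramids is ever needed.

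Your recursive plan can be made to work---it is essentially the $a=2$ shadow of what the paper later develops in Proposition~\ref{pyratrees}---but as written there is a real gap: the map $p\mapsto(p_L,p_R)$ is never actually defined. Your informal description (``material over the column of $b$'' versus ``material propagating to the right'') does not determine the split; a piece at $]1,3[$ resting on $b$ already overlaps the column of $b$, and pieces higher up may zig-zag between columns $0$ and $1$ arbitrarily. The clean fix is not to chase the single piece $c$ on $b$ but to use the decomposition of Lemma~\ref{decomp1} one level up: set $q=p\setminus\{b\}$, a pyramid with bottom $c$; if $c$ covers $]1,3[$, write $q=p_R'\odot p_L$ with $p_R'$ the maximal right $1$-sub-pyramid at the bottom of $q$ and $p_L$ the (possibly empty) remaining right $0$-pyramid, and let $p_R$ be $p_R'$ shifted left by one; if $c$ covers $]0,2[$, set $p_R=\emptyset$ and $p_L=q$. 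With this definition the recursion goes through as $\beta(p)=1\,\beta(p_R)\,0\,\beta(p_L)$ (note the order: it is $p_R$, not $p_L$, that sits inside the first excursion). The paper's iterative argument trades this structural analysis for a short invariant check, which is why it is quicker here; your approach, once completed, has the compensating virtue of exhibiting the Catalan recursion and the binary-tree correspondence explicitly.
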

\begin{proof}
Let $p$ be a right $0$-pyramid of size $m$. We construct inductively the
corresponding positive string $x_1\dots x_{2m}$ together with a sequence
$p^{(1)}\dots p^{(2m)}$ of $0$-pyramids that are sub-pyramids of $p$
such that $p^{(2m)}=p$ as follows. 

Let $x_1=1$ and $p^{(1)}$ be the bottom piece of $p$. Assume $x_1\dots
x_s$ and $p^{(1)}\dots p^{(s)}$ have been constructed. If a piece 
 above the interval $]t_s,t_s+1[$ can be dropped onto $p^{(s)}$ to
obtain a sub-pyramid of $p$ we let $p^{(s+1)}$ be that pyramid and set
$x_{s+1}=1$. Otherwise, set $p^{(s+1)}= p^{(s)}$ and $x_{s+1}=0$. Here
$t_s$ is given as in Definition~\ref{postring} and one readily checks
that at any stage $t_s$ is less than the width of $p^{(s)}$,
i.e. the length of the projection of $p^{(s)}$ onto the horizontal axis, and
that the size of $p^{(s)}$ equals the number of $1$'s in $x_1\dots
x_s$. Indeed, by construction, any piece that can be dropped onto
$p^{(s)}$ to obtain a sub-pyramid of $p$ is above some interval
contained in $]0,t_s+1[$, and at any stage we choose the rightmost of
those pieces to obtain $p^{(s+1)}$. It follows that the so obtained
sequence $x_1\dots x_{2m}$ after $2m$ steps is a positive
$(2m,m)$-string since otherwise the number of $1$'s would be less than
$m$ and $t_{2m}$ would hence be negative, which is not possible.

If $p\neq p'$, the corresponding sequences $p^{(1)}\dots p^{(2m)}$ and  
$p'^{(1)}\dots p'^{(2m)}$ will deviate at some minimal step $s$,
$1<s\leq 2m$, and it follows that the corresponding strings also deviate at
step $s$. 
On the other hand, any positive $(2m,m)$-string $x_1\dots x_{2m}$ can
be obtained by the described procedure from the right $0$-pyramid $p$
obtained by successively dropping a piece above those intervals
$]t_s,t_s+1[$ for which $x_s=1$, with the convention $t_0=0$. This
concludes the proof. 
\end{proof}

\begin{lem}
\label{postringpyr1} Any $(2m,m)$-string $w=x_1\dots x_{2m}$ starting
with $x_1=1$ can be written in a unique way by juxtaposition as
$$
w= w_1\dots w_r
$$
where $w_i$ is a positive $0$-string if $i$ is odd, and $w_i^{-1}$ is
a positive $0$-string if $i$ is even. Here $w_i^{-1}$ denotes the
string obtained from $w_i$ by reversing its order.
\end{lem}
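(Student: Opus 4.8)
The plan is to pass entirely to the language of the associated walk $t_0=0,t_1,\dots,t_{2m}$ introduced in Definition~\ref{postring}, where $t_s-t_{s-1}=2x_s-1=\pm1$, and to read the asserted factorisation off the way this walk crosses the level $0$. In this language a \emph{positive $0$-string} is simply a balanced string whose walk stays non-negative and returns to $0$, so the statement concerns closed walks of length $2m$ that begin with an up-step. The first thing I would record is a reflection formula for reversal: if $v=y_1\dots y_n$ is balanced, then the partial sum of the reversed string $v^{-1}$ at stage $j$ equals $\sum_{i=n-j+1}^n(2y_i-1)=t_n-t_{n-j}=-t_{n-j}$. Hence reversal negates the walk read backwards, and in particular $v^{-1}$ is a positive $0$-string if and only if $v$ is a balanced string whose walk stays non-positive. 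This converts the even-index condition into the clean requirement that $w_i$ be a \emph{non-positive} $0$-string, so the claim becomes: a closed walk beginning with an up-step splits uniquely into maximal stretches that are alternately non-negative and non-positive, starting with a non-negative one.

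For existence I would use the crossings of $0$. Let $b_0=0<b_1<\dots<b_{r-1}<b_r=2m$ be the boundaries obtained by cutting at every $s$ with $t_s=0$ at which the walk changes sign, i.e.\ $t_{s-1}t_{s+1}<0$; between two consecutive such cuts the walk returns to $0$ only from one side, so each segment $w_i=x_{b_{i-1}+1}\dots x_{b_i}$ is a balanced string that is weakly on one side of $0$ throughout. Because $x_1=1$ the first segment is non-negative, and by maximality the signs of consecutive segments alternate; thus $w_i$ is a positive $0$-string for odd $i$ and a non-positive $0$-string for even $i$, which by the reflection formula is exactly the assertion. If the walk never goes negative this simply gives $r=1$ and $w=w_1$.

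For uniqueness I would argue that the cut points are forced, proceeding from the left by induction. Suppose $w=w_1\dots w_r$ is any factorisation with the stated properties and nonempty factors; evaluating $t$ at the end of $w_1\cdots w_i$ shows every boundary is a zero of the walk. Let $N$ be the first time the walk is negative, with the convention $N=2m+1$ if it never is. Since $w_1$ keeps the walk non-negative we have $b_1\le N-1$; and since $w_2$ is non-positive and nonempty its first step is a down-step, forcing $t_{b_1+1}=-1$ and hence $b_1=N-1$, because any earlier zero would be followed by an up-step and contradict non-positivity of $w_2$. Stripping off $w_1$ and exchanging the roles of the two signs, the same argument applied to the closed suffix $x_{b_1+1}\dots x_{2m}$ determines $b_2$, and so on; thus the factorisation coincides with the one constructed above.

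The step I expect to require the most care is the bookkeeping around reversal: one must keep straight that the even factors are genuinely non-positive as segments of the \emph{global} walk, which is legitimate precisely because each boundary sits at height $0$, so relative and absolute heights agree, and that the reflection formula then yields the stated positivity of $w_i^{-1}$. The remaining subtlety is the degenerate behaviour — the case where the walk never leaves the non-negative region, and the status of the final segment — which the convention $N=2m+1$ and the left-to-right induction handle uniformly.
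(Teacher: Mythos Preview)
Your argument is correct and follows precisely the route the paper takes: translate to the walk picture and decompose into the alternating maximal non-negative and non-positive excursions, noting that reversal of a balanced factor swaps the two. The paper compresses all of this into a single sentence declaring the excursion decomposition ``obvious,'' whereas you have spelled out the reflection identity, the sign-change cut points, and the forced left-to-right determination of the boundaries; there is no real methodological difference.
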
 
\begin{proof} Using the correspondence between strings and nearest
neighbouring walks on the integers, we see that the statement amounts  
to asserting the obvious unique decomposition of a walk starting and
ending at $0$ into an alternating sequence of walks on the
non-negative, respectively the non-positive, integers.
\end{proof}

We are now in a position to derive the following result which, in
particular, proves Theorem~\ref{thm} in case $a=2$. 

\begin{prop}
\label{enuma2}
There is a bijective correspondence between pyramids of size $m$ and
$(2m,m)$-strings starting with $1$. In particular, the number of
pyramids of size $m$ equals $2m-1\choose m-1$.
\end{prop}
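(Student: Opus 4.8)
The plan is to splice together the three preceding lemmas so that the alternating decomposition of Lemma~\ref{decomp1} matches, piece by piece, the alternating decomposition of Lemma~\ref{postringpyr1}. Given a pyramid $p$ of size $m$, Lemma~\ref{decomp1} yields a unique sequence $(p_1,\dots,p_r)$ with $\sum_i|p_i|=m$, where $p_i$ is a right $0$-pyramid for odd $i$ and a left $1$-pyramid for even $i$. I would encode each $p_i$ as a string $w_i$ of length $2|p_i|$ and define the string assigned to $p$ by the juxtaposition $w=w_1w_2\cdots w_r$.

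For the odd-indexed pieces the encoding is immediate: Lemma~\ref{stringpyr1} assigns to the right $0$-pyramid $p_i$ a positive $(2|p_i|,|p_i|)$-string $w_i$. For the even-indexed pieces I would first apply the reflection $x\mapsto 1-x$ of the horizontal axis, which sends the bottom interval $]1-a,1[$ to $]0,a[$ and a rightmost piece to a leftmost one, hence carries the left $1$-pyramid $p_i$ bijectively onto a right $0$-pyramid of the same size. Feeding this reflected pyramid into Lemma~\ref{stringpyr1} produces a positive string, and I would take $w_i$ to be its reversal, so that $w_i^{-1}$ is positive. Since every positive string begins with $1$, the first letter of $w=w_1\cdots w_r$ is $1$, and $w$ is a $(2m,m)$-string.

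To see that $p\mapsto w$ is a bijection onto the $(2m,m)$-strings starting with $1$, I would invoke Lemma~\ref{postringpyr1}: any such $w$ factors uniquely as $w_1\cdots w_r$ with $w_i$ positive for odd $i$ and $w_i^{-1}$ positive for even $i$. This is exactly the factorisation our construction produces, so inverting the bijections of Lemmas~\ref{stringpyr1} and~\ref{decomp1} (together with the reflection) recovers from any such $w$ a unique admissible sequence $(p_1,\dots,p_r)$, hence a unique pyramid $p$ of size $m$. The step demanding the most care is checking that the reflection dovetails with string reversal in the way just described---that a left $1$-pyramid is sent precisely to a string whose reverse is positive, and not merely to some positive string---since it is this matching that makes the concatenation both well defined and invertible.

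Finally the count follows at once: a $(2m,m)$-string beginning with $1$ is determined by distributing the remaining $m-1$ ones among the remaining $2m-1$ positions, so there are $\binom{2m-1}{m-1}$ of them, which therefore equals the number of pyramids of size $m$.
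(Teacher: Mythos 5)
Your argument is correct and follows the paper's proof essentially verbatim: the paper likewise composes the decomposition of Lemma~\ref{decomp1} with the encoding of Lemma~\ref{stringpyr1} (via the obvious left/right pyramid correspondence and string reversal) and the unique factorisation of Lemma~\ref{postringpyr1}, and obtains the count $\binom{2m-1}{m-1}$ by the same placement argument. You have merely made explicit the reflection and the matching of the two alternating decompositions, which the paper leaves implicit.
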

\begin{proof} Since, obviously, there is a bijective correspondence
between left and right $0$-pyramids of given size and since reversal
of ordering of a string is injective, the claimed correspondence
follows from the preceding three lemmas. The last statement follows
by noting that a $(2m,m)$-string starting with $1$ is uniquely
determined by the position of the remaining $1$'s among the remaining
$2m-1$ entries of the string.
\end{proof}

\begin{figure}\label{sixconfs}
\begin{center}
\begin{tabular}{|c|c|c|c|c|}\hline
\fetch{fig7}&\fetch{fig9}&\fetch{fig6}&\fetch{fig2}&\fetch{fig0}\\\hline
$p_1=111000$&$p_1=110100$&$p_1=110010$&$p_1=101100$&$p_1=101010$\\\hline
\end{tabular}\\
\begin{tabular}{|c|c|c|c|c|}\hline
\includegraphics[height=1.66cm]{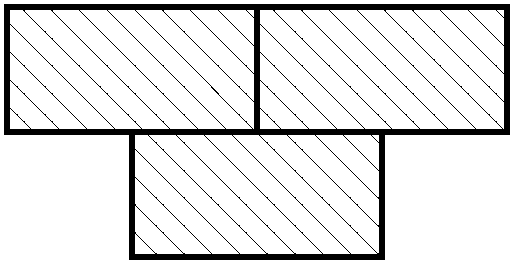}&\fetch{fig1}&\fetch{fig5}&\fetch{fig4}&\fetch{fig8}\\\hline
$p_1=1100$&$p_1=1010$&$p_1=10$&$p_1=10$&$p_1=10$\\
$p_2={01}$&$p_2={01}$&$p_2={01}$&$p_2={0101}$&
$p_2={0011}$\\
&&$p_3=10$&&\\\hline
\end{tabular}
\end{center}
\caption{The ten $2$-pyramids of size 3 and their description as $(6,3)$-strings}
\end{figure}

\section{Decomposition of pyramids and strings for $a\geq 3$}
\label{sect3}

In this and the subsequent section we assume $a$ is fixed and larger
than or equal to $3$. Accordingly, we extend the notion of positive
strings as follows. 
\begin{defn}
\label{postringa}
 An $(am,m)$-string $x_1x_2\dots x_{am}$ is called \emph{positive} if 
$$
t_s \equiv \sum_{u=1}^s (a\,x_u -1) 
$$ 
is non-negative for all $s=1,\dots,am$. Moreover, a string is called
\emph{negative} if the reversed string is positive.
\end{defn}

Note that a positive string necessarily begins with $1$ and ends
with at least $a-1$ consecutive $0$'s. 

The following two lemmas are simple generalisations of
Lemmas~\ref{decomp1} and \ref{stringpyr1}.  

\begin{lem}
\label{decomp2}
There is a bijective correspondence between pyramids of size $m\geq 1$ and
sequences $(p_1,p_2,\dots,p_r)$ of pyramids such that $p_i$ is a right
$s_i$-pyramid if $i$ is odd and a left $s_i$-pyramid if $i$ is even,
$|p_1|+\dots +|p_r|=m\,,\; s_1=0,\,$ and 
$$
1\leq s_{i+1}-s_i \leq a-1\quad \mbox{if $i$ is odd}\qquad
\mbox{and}\qquad 1\leq s_i-s_{i+1} \leq a-1\quad \mbox{if $i$ is even.}
$$
\end{lem}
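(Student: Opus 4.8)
The plan is to generalize the decomposition from Lemma~\ref{decomp1}, where the key difference is that for $a\geq 3$ consecutive bottom pieces of adjacent sub-pyramids need not abut exactly but may be shifted by any amount between $1$ and $a-1$. First I would argue that if $p$ is not already a right $0$-pyramid, then among all pieces of $p$ lying strictly to the left of the bottom piece's right endpoint there is a lowest one (with ties broken suitably, say leftmost-lowest), and this piece is the bottom piece of a uniquely determined proper sub-pyramid $p'$ overlapping the original bottom piece. Writing $p = p_1 \odot p'$ in Viennot's notation, the first factor $p_1$ is a right $0$-pyramid, and $p'$ is a pyramid whose bottom piece covers some interval $]s_2,s_2+a[$; the overlap condition that the two pieces are concurrent but distinct forces $1 \leq s_2 - s_1 = s_2 \leq a-1$, giving the required bound for $i=1$ odd.

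Next I would iterate: having produced $p_1,\dots,p_i$ and a residual pyramid whose bottom piece sits at position $s_{i+1}$, I would locate in that residual the lowest piece lying on the appropriate side (strictly to the right of the current bottom's left endpoint when the current pyramid is a right-pyramid, strictly to the left when it is a left-pyramid) and split off the next factor. The alternation between right- and left-pyramids arises because at each stage the chosen factor is extremal (leftmost or rightmost) in the residual by construction, so its complement's bottom piece lies on the opposite side; the concurrency-but-distinctness of successive bottom pieces yields exactly the inequalities $1\leq s_{i+1}-s_i\leq a-1$ for $i$ odd and $1\leq s_i-s_{i+1}\leq a-1$ for $i$ even. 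Because each factor is a proper sub-pyramid consuming at least one piece, the process terminates after finitely many steps with $|p_1|+\dots+|p_r|=m$, and the construction is manifestly deterministic, hence defines a well-defined map from pyramids to such sequences.

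For the reverse direction I would show that any admissible sequence $(p_1,\dots,p_r)$ reassembles, via the heap product $\odot$, into a unique pyramid of size $m$ with bottom piece at $]0,a[$. The shift constraints guarantee that consecutive bottom pieces overlap, so the product is again a pyramid (connected, single bottom piece), and that $p_1$ being a right $0$-pyramid fixes the overall bottom at the origin. Verifying that this assembly map is a two-sided inverse of the decomposition is the crux: one must check that reconstructing the lowest extremal piece at each stage recovers exactly the split that was performed, which follows from the extremality built into the decomposition together with the precise meaning of right/left $s$-pyramid.

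The main obstacle I anticipate is pinning down the correct extremality convention so that the sub-pyramid $p'$ split off at each stage is genuinely unique. For $a=2$ the bottom pieces of successive factors were forced into the adjacent slots $]-1,1[$ and $]0,2[$, leaving no ambiguity; for $a\geq 3$ there is a whole range of admissible offsets $1,\dots,a-1$, so the ``lowest piece above a suitable interval'' prescription must be stated carefully enough to select one piece unambiguously and to guarantee that the factor it generates is a \emph{proper} sub-pyramid (so the induction strictly decreases size) while its bottom piece lands at a position satisfying the stated inequalities. Once this localization step is nailed down precisely, the bijectivity is a routine consequence, exactly parallel to the proof of Lemma~\ref{decomp1}.
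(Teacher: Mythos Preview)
Your approach is exactly the paper's: iteratively peel off the maximal right (resp.\ left) sub-pyramid by locating the lowest piece that protrudes past the current bottom on the wrong side, then recurse on the residual. Two points in your write-up need correction, however.

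First, at the initial stage the relevant piece is the lowest one whose interval meets the \emph{negative} axis, i.e.\ extends to the left of the bottom piece's \emph{left} endpoint $0$, not its right endpoint $a$. That piece covers $]s_2-a,s_2[$ (not $]s_2,s_2+a[$), in accordance with the convention that a left $s_2$-pyramid has its bottom piece at $]s_2-a,s_2[$; with this parametrisation the inequality $1\leq s_2\leq a-1$ comes out as stated.

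Second, no tie-breaking is required, and the bound $s_2\geq 1$ does not follow from concurrency of the two bottom pieces (which need not be directly concurrent). The correct argument is: a piece at the minimal level $\ell$ meeting $]-\infty,0[$ must rest on some piece at level $\ell-1$, and by minimality of $\ell$ that supporting piece lies entirely in $[0,\infty[$; overlap with it forces the right endpoint $s_2>0$. Uniqueness is then automatic, since any two level-$\ell$ candidates would both be intervals of length $a$ containing $0$, hence concurrent, which is impossible at a common level.

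With these fixes your decomposition is well-defined and strictly size-decreasing, the reassembly via $\odot$ works as you describe, and the proof coincides with the paper's.
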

\begin{proof}
 If the pyramid $p$ is not a right $0$-pyramid there is a
lowest piece in $p$ above some interval that overlaps the negative real
axis, that is an interval $]s_2-a,s_2[$, where $0<s_2<a$. This piece is
the bottom piece of a unique proper sub-pyramid $p'$ and we can write 
$p=p_1\odot p'$, where $p_1$ is a
right $0$-pyramid. If $p'$ is not a left $s_2$-pyramid it contains a
unique lowest piece above some interval $]s_3,s_3+a[$, where
$s_2-a<s_3<s_2$. This piece is the bottom piece of a proper sub-pyramid
$p''$ of $p'$, and we have $p'=p_2\odot p''$, where $p_2$ is a left
$s_2$-pyramid. The claim follows by repeating the argument a
sufficient number of times. 
\end{proof}
\begin{lem}
\label{stringpyr2}
There is a bijective correspondence between right $0$-pyramids of
pieces of length $a$ and of size
$m$ and positive $(am,m)$-strings.
\end{lem}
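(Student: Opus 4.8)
The plan is to mimic the proof of Lemma~\ref{stringpyr1} for the dimer case, now keeping track of the full width rather than a single-step increment.The plan is to reproduce the proof of Lemma~\ref{stringpyr1} with the dimer width $2$ replaced throughout by $a$, the only change being that the cursor now advances by $a-1$ on a placement and retreats by $1$ on a failed attempt, matching $t_s=\sum_{u\le s}(ax_u-1)$ of Definition~\ref{postringa}. Concretely, given a right $0$-pyramid $p$ of size $m$ I would build inductively a string $x_1\cdots x_{am}$ together with sub-pyramids $p^{(1)},\dots,p^{(am)}$ of $p$: set $x_1=1$ and let $p^{(1)}$ be the bottom piece over $]0,a[$; at the $(s{+}1)$-st step attempt to drop a piece over $]t_s,t_s+a[$ onto $p^{(s)}$ so as to obtain a sub-pyramid of $p$, recording $x_{s+1}=1$ and calling the result $p^{(s+1)}$ if this is possible, and otherwise setting $x_{s+1}=0$, $p^{(s+1)}=p^{(s)}$. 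As in the dimer case one checks directly that the size of $p^{(s)}$ equals the number of $1$'s in $x_1\cdots x_s$, and that $t_s<w_s$, where $w_s$ denotes the width of $p^{(s)}$; this last inequality guarantees that the attempted interval always meets $p^{(s)}$, so the procedure is well defined.

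The step that really uses the right-$0$-pyramid hypothesis, and which I expect to be the main obstacle, is positivity, i.e.\ $t_s\ge 0$ for all $s$. Here I would isolate the invariant that at every stage each piece of $p$ that can be dropped onto $p^{(s)}$ lies over an interval with left endpoint at most $t_s$: a piece made droppable by the piece just placed over $]t_{s-1},t_{s-1}+a[$ must overlap it and hence has left endpoint $<t_{s-1}+a=t_s+1$, while a failed attempt at $t_{s-1}$ rules out left endpoint exactly $t_{s-1}$ and so pushes the bound down to $t_s=t_{s-1}-1$. Since $p$ is a right $0$-pyramid no piece extends to the left of $0$, so every droppable piece has left endpoint in $[0,t_s]$; as long as $p^{(s)}\ne p$ such a piece exists (a proper sub-pyramid can always be extended, cf.\ \cite{viennot}), forcing $t_s\ge0$, and once $p^{(s)}=p$ one has $t_s=am-s\ge0$ for $s\le am$. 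Running the construction for exactly $am$ steps then yields $t_{am}=ak-am\ge0$, where $k$ is the number of $1$'s; hence $k\ge m$, and since $k$ is also the size of $p^{(am)}$, which is at most $|p|=m$, we get $k=m$. Thus $p^{(am)}=p$ and $x_1\cdots x_{am}$ is a positive $(am,m)$-string.

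Finally I would establish the two directions of the bijection exactly as for $a=2$. For injectivity, if $p\ne p'$ their sequences $p^{(s)}$ first differ at some step $s$, and since the choice at each step is determined by the rightmost droppable piece this forces the strings to differ at $s$ as well. For surjectivity, given a positive $(am,m)$-string I would drop a piece over $]t_{s-1},t_{s-1}+a[$ for each $s$ with $x_s=1$ (with $t_0=0$): positivity gives $t_{s-1}\ge0$, while the width estimate $t_{s-1}<w_{s-1}$ holds by the same bookkeeping as before, so every such piece has left endpoint $\ge0$ and overlaps the current configuration, producing a connected heap whose unique lowest, leftmost piece is the one over $]0,a[$, i.e.\ a right $0$-pyramid. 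One then checks that the construction of the first paragraph applied to this pyramid returns the original string, so the two maps are mutually inverse.
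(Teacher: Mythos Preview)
Your proposal is correct and follows exactly the approach indicated by the paper, whose proof of this lemma is simply ``the claim follows by a straight-forward generalisation of the proof of Lemma~\ref{stringpyr1} the details of which are left to the reader.'' You have supplied precisely those details---the cursor increment $a-1$ in place of $1$, the invariant that all droppable pieces have left endpoint in $[0,t_s]$, and the resulting positivity and termination arguments---so there is nothing to add.
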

\begin{proof}
The claim follows by a straight-forward generalisation of the proof of
Lemma~\ref{stringpyr1} the details of which are left to the reader.
\end{proof}

The following related correspondence between right pyramids and trees
will not be needed in the proof of Theorem~\ref{thm}, but may be of some
independent interest. Recall that an \emph{n-ary tree}, where $n$ is a
fixed positive integer, is a planar rooted tree all of whose vertices
have order $1$ or $n+1$ and whose root has order $1$. The vertices of
order $n+1$ are called \emph{nodes}.

\begin{prop}
\label{pyratrees}
There is a bijective correspondence between right $0$-pyramids of
pieces of length $a$ and of size
$m$ and $a$-ary trees with $m$ nodes.
\end{prop}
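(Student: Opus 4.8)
The plan is to reduce the statement, via Lemma~\ref{stringpyr2}, to a bijection between positive $(am,m)$-strings and $a$-ary trees with $m$ nodes; composing that bijection with the string--pyramid correspondence of Lemma~\ref{stringpyr2} then yields the proposition. To encode a tree $T$ with $m$ nodes as a string, I would traverse $T$ in depth-first preorder, starting at the unique child of the root and emitting $1$ upon first visiting a node and $0$ upon visiting a leaf. A count of degrees shows that $T$ has $(a-1)m+1$ leaves besides its $m$ nodes, so this produces a word of length $am+1$ containing exactly $m$ ones; moreover its last symbol is necessarily $0$, since the last vertex reached in preorder is a leaf. Deleting that final $0$ leaves a candidate string $x_1\cdots x_{am}$ with exactly $m$ ones.

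Next I would check that $x_1\cdots x_{am}$ is positive in the sense of Definition~\ref{postringa}. The point is that, after $s$ vertices have been visited, the number of child-slots that have been created but not yet filled equals $1+t_s$, where $t_s=\sum_{u\leq s}(a\,x_u-1)$ as in Definition~\ref{postringa}: visiting a node fills one slot and opens $a$ new ones (net change $a-1$), while visiting a leaf fills one slot and opens none (net change $-1$). This quantity is a strictly positive integer as long as unvisited vertices remain and drops to $0$ exactly when the final leaf is read, so $t_s\geq 0$ for every $s\leq am$; since $t_{am}=am-am=0$ holds automatically, the truncated word is indeed a positive $(am,m)$-string. (This is the classical depth-first, or Łukasiewicz, coding of a plane tree by its preorder degree sequence.)

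For the inverse I would take a positive $(am,m)$-string, append a single $0$, and parse the resulting length-$(am+1)$ word: because $t_{am}=0$, this word is non-negative on every proper prefix and first reaches $-1$ at its final entry, which is precisely the condition for it to be the preorder degree sequence (nodes coded by $1$, leaves by $0$) of a unique $a$-ary tree, reconstructed by the usual stack-based parsing that attaches each newly read vertex to the next unfilled child-slot. The two maps are mutually inverse by construction. The main obstacle to make fully rigorous is exactly the equivalence, in both directions, between the non-negativity condition on $t_s$ and the combinatorial validity of the tree (that each symbol can be attached to an available slot and that the forest closes up with no slots left over); each individual step is standard, but the verification that truncation and re-appending of the final $0$ are inverse operations, and that the parsing reproduces the original traversal, must be carried out with care. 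Alternatively, one could give a direct recursive bijection reflecting the decomposition of a positive string into a leading $1$ followed by the $a$ concatenated subtree-words, mirroring the decomposition of an $a$-ary tree into a node and its $a$ ordered subtrees.
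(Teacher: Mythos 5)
Your argument is correct, but it takes a genuinely different route from the paper's. You construct an explicit bijection between positive $(am,m)$-strings and $a$-ary trees via the classical preorder degree-sequence (Lukasiewicz-type) coding: the slot-counting identity ``open slots after $s$ visits $=1+t_s$'' correctly translates the validity of the traversal into the non-negativity condition of Definition~\ref{postringa}, and the leaf count $(a-1)m+1$ checks out, so composing with Lemma~\ref{stringpyr2} proves the proposition. The paper instead passes to generalised Dyck $(a-1)$-paths, peels off the initial up-step to decompose each path into $a$ smaller such paths separated by single down-steps, derives the recursion $A_m=\sum A_{m_1}\cdots A_{m_a}$ and hence the functional equation \eqref{recursion}, and concludes by observing that the generating function for $a$-ary trees satisfies the same equation; strictly speaking this establishes only equinumerosity (the bijection is implicit in matching the two recursive decompositions), whereas your construction is a direct, explicit bijection and in that sense better matches the letter of the statement. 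What the paper's route buys in exchange is the identity \eqref{recursion} itself, which is reused immediately for Corollary~\ref{cor} and again in the singularity analysis leading to Proposition~\ref{averasymp}; your approach does not produce that relation as a byproduct, though it is essentially equivalent to the recursive alternative you sketch in your final sentence, which is precisely the paper's decomposition transported from paths to strings.
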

\begin{proof}
   First, note that there is an
obvious bijective correspondence between $(am,m)$-strings and walks on
the integers starting and ending at $0$ and consisting of $m$
right-steps, each of length $a-1$, and $(a-1)m$ left-steps, each of
length $1$. In fact, $t_s$ as given in Definition~\ref{postringa} defines
the $s$'th site visited by the walk corresponding to a given
string. Alternatively, the corresponding walk can
 be viewed as a path on the square lattice $\mathbb Z^2$
from $(0,0)$ to $(am,0)$ with steps $(1,a-1)$ or $(1,-1)$, called
\emph{up-steps} and \emph{down-steps}, respectively. Positive
strings then correspond to paths with vertices on or above the first
axis only, and they are called \emph{generalised Dyck $(a-1)$-paths}
\cite{cameron}, \cite{cameronii}.  

Consider a generalised Dyck $(a-1)$-path $\omega$ and let $\omega'$
be the path obtained by removing the first step, which is necessarily
an up-step. Thus $\omega'$ starts at height $a-1$ and ends at height
$0$. Let now $\omega_1$ be the part of $\omega'$ extending from the
the first vertex in $\omega'$ at height $0$ to the 
final vertex $(0,0)$. Then $\omega_1$ is a generalised $(a-1)$-Dyck
path (possibly trivial), and $\omega'$ equals a path $\omega''$
starting at level $a-1$, ending at level $1$ and nowhere dropping
below level $1$, followed by first a
down-step and then by $\omega_1$. Next, let $\omega_2$ be the part of
$\omega''$ extending from the first
vertex in $\omega''$ at height $1$ to the final vertex $(x_1,1)$. Then
$\omega_2$ is a translated generalised Dyck $(a-1)$-path and the
construction may be repeated $a$ times to yield a decomposition of
$\omega$ into a sequence $\omega_a,\omega_{a-1},\dots,\omega_1$ of
generalised $(a-1)$-Dyck paths (suitably 
translated and possibly trivial) connected by single down-steps and
preceded by an up-step. As a consequence, the number $A_m$ of
generalised $(a-1)$-Dyck paths with $m$ up-steps satisfies the
recursion relation
$$
A_m = \sum_{{m_1+\dots +m_a=m-1}\atop{m_1,\dots,m_a\geq
0}}A_{m_1}\cdot\ldots\cdot A_{m_a}
$$
Rephrased in terms of the generating function 
\begin{equation}
\label{genfct}
A(t) = \sum_{m=1}^\infty A_m\,t^m
\end{equation}
this relation takes the form
\begin{equation}
\label{recursion}
A(t) = t(1+ A(t))^a\,.
\end{equation}
This identity has been noted previously in \cite{cameron} (for $a=3$).
It is well known, and easy to establish, that the generating function
for the number of $a$-ary trees as a function of the number of nodes
likewise satisfies \eqref{recursion}. We conclude that the
coefficients are equal and hence, in view of Lemma~\ref{stringpyr2},
the claimed bijection is established.
\end{proof}
\begin{cly}
\label{cor}
The number $A_m$ of right pyramids of size $m$ is given by
$$
A_m = \frac{1}{(a-1)m+1}{am\choose m} =
\frac{(am)!}{m!((a-1)m+1)!}\,\quad m\geq 1.
$$
\end{cly}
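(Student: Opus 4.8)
The plan is to extract the closed form for $A_m$ directly from the functional equation \eqref{recursion}, which Proposition~\ref{pyratrees} tells us the generating function $A(t)$ satisfies. Since $A(t) = t(1+A(t))^a$, the natural tool is Lagrange inversion: writing $y = A(t)$, the equation says $t = y/(1+y)^a = y\,\phi(y)^{-1}$ with $\phi(y) = (1+y)^a$, so $A(t)$ is precisely the compositional inverse structure to which Lagrange's formula applies.

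First I would set $y = A(t)$ and rewrite the defining relation as $t = \psi(y)$ where $\psi(y) = y(1+y)^{-a}$. The Lagrange inversion theorem then gives the coefficients of the inverse series: for a function of the form $y = t\,\phi(y)$ with $\phi(y) = (1+y)^a$, one has
\begin{equation}
\label{laginv}
A_m = [t^m]\,A(t) = \frac{1}{m}\,[y^{m-1}]\,(1+y)^{am}.
\end{equation}
The coefficient extraction is immediate from the binomial theorem: $[y^{m-1}](1+y)^{am} = \binom{am}{m-1}$.

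Thus $A_m = \frac{1}{m}\binom{am}{m-1}$, and the only remaining task is the elementary manipulation showing this agrees with the stated forms. Expanding,
\begin{equation}
\label{simplify}
A_m = \frac{1}{m}\cdot\frac{(am)!}{(m-1)!\,((a-1)m+1)!}
= \frac{(am)!}{m!\,((a-1)m+1)!},
\end{equation}
and pulling out a factor $\frac{1}{(a-1)m+1}$ recovers $\frac{1}{(a-1)m+1}\binom{am}{m}$, which is the first expression claimed. Since Lemma~\ref{stringpyr2} identifies right $0$-pyramids of size $m$ with positive $(am,m)$-strings, and the latter are exactly the generalised Dyck $(a-1)$-paths counted by $A_m$, this establishes the corollary.

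The calculation is entirely routine once Lagrange inversion is invoked, so I do not anticipate a genuine obstacle. The one point requiring a little care is the bookkeeping in \eqref{laginv}: one must apply the inversion formula in the correct normalisation (the $\frac{1}{m}$ prefactor and the shift to $[y^{m-1}]$), and then check that the two displayed closed forms in the statement are genuinely equal rather than merely proportional. Both of these are standard, and indeed the numbers $A_m$ are the well-known Fuss--Catalan numbers, for which \eqref{simplify} is the classical formula; citing that identification would let one shorten the argument to essentially a single line.
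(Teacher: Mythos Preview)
Your argument is correct. Both your proof and the paper's rest on the same ingredient, namely the functional equation \eqref{recursion} for the generating function of generalised Dyck $(a-1)$-paths established inside the proof of Proposition~\ref{pyratrees}; but from there you proceed differently. The paper observes that \eqref{recursion} is also satisfied by the generating function for $a$-ary trees (this is the content of Proposition~\ref{pyratrees} itself) and then simply cites Stanley for the closed form of that count. You instead apply Lagrange inversion directly to $A(t)=t(1+A(t))^a$ and read off the Fuss--Catalan expression without passing through trees at all. Your route is more self-contained and does not actually use the bijection with $a$-ary trees; the paper's route is a one-line citation but imports an external enumeration result. One small quibble of attribution: equation~\eqref{recursion} is derived in the \emph{proof} of Proposition~\ref{pyratrees}, not asserted by the proposition itself, so it would be cleaner to invoke the recursion for $A_m$ (or equivalently \eqref{recursion}) directly rather than to say the proposition ``tells us'' this.
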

\begin{proof}  It is known that
the stated expression for $A_m$ equals the number of $a$-ary trees
with $m$ nodes, see e.g. \cite{stanley}. Hence the claim follows
from Proposition~\ref{pyratrees}.
\end{proof}
 
\begin{rem}
It is, in fact, quite straight-forward to show directly, by a slight
modification of the argument given in the proof of Proposition 1 in
\cite{bousrech}, that the generating function for the number of right
$0$-pyramids as a function of size satisfies the identity
\eqref{recursion}. The argument involving generalized Dyck paths given
above exhibits at the same time a 
proper generalisation of the well-known, and much exploited, correspondence
between binary trees and standard Dyck paths, see e.g. \cite{LeGall}.  
\end{rem}

\smallskip 

In order to continue our efforts to establish a decomposition result
analogous to Lemma~\ref{postringpyr1} some additional
notation will be needed. We
shall find it convenient to use the language of walks
instead of strings in the following. Hence all walks subsequently will
be assumed to have right-steps of length $a-1$ and left-steps of
length $1$. A generic walk starting at $i\in\mathbb Z$ and ending
at $j\in\mathbb Z$ will be denoted by $S_{ij}(m), S'_{ij}(m')$ etc. For
$i=0$ the walk corresponding to an $(n,m)$-string is obtained by
letting $t_s$ given as in Definition~\ref{postringa} be its $s$'th
site. In Figures \ref{grafN} and \ref{grafT} and walks are illustrated by paths in $\mathbb
Z^2$, replacing each right-step by an up-step $(1,a-1)$ and each
left-step by a down-step $(1,1)$. In the case illustrated, $a=6$.

Given two walks $S'_{ij}(m')$ and $S''_{jk}(m'')$, the walk obtained by
by first traversing $S'_{ij}(m')$ and then $S''_{kj}(m'')$ will be
called the walk obtained by \emph{composing} $S'_{ij}(m')$ and $S''_{jk}(m'')$
and will be denoted by $S'_{ij}(m')S''_{jk}(m'')$. Thus, composition of
walks corresponds to juxtaposition of the corresponding strings. 

Evidently, an $(am,m)$-string is positive if and only if the
corresponding walk takes place on the non-negative
integers. Generally, we shall call a walk $S_{ij}(m)$ positive if
$j\geq i$ and  the walk $S_{ii}(m)$ obtained by adding $j-i$
left-steps at the end is a translate (by $i$) of a walk on the
non-negative integers. Positive walks will be denoted by
$P_{ij}(m), P'_{ij}(m')$ etc. 

Given a walk $S_{ij}(m)$, its \emph{inverse walk} $S^{-1}_{j\,(2j-i)}(m)$ is
defined as the walk obtained by reflecting $S_{ij}(m)$ in the point
$j$ and reversing its direction of traversal. If $i=j=0$ this
corresponds to reversing the order of the corresponding string. A
walk is called \emph{negative} if its inverse is a positive walk. 
Generic negative walks will be denoted by $N_{ij}(m), N'_{ij}(m')$ etc. 
Note that positive walks begin with a right-step whereas negative
walks end with a right-step.

For $0\leq j<i\leq a-2$ we denote by $T_{ij}$ the straight walk from
$i$ to $j$ consisting of $i-j$ left-steps and,  for $0\leq j<k\leq
a-2$, we define
$$
S_{ij}(m) = S'_{ik}(m)T_{kj} \Leftrightarrow S'_{ik}(m) =
S_{ij}(m)U_{jk}\,,
$$
that is the last equation means that $S_{ij}(m)$ contains at least $k-j$
consecutive left-steps at the end, and $S'_{ik}(m)$ is obtained from
$S_{ij}(m)$ by deleting its last $k-j$ steps. In particular, we note
that any walk $P_{ii}(m)$ necessarily ends with at least $a-1$
left-steps such that  $P_{ii}(m)U_{ik}$ is well-defined, and ends with
a left-step, for $0\leq i<k\leq a-2$.

\begin{defn}\label{admis}
Consider a multiple composition of walks of the types $P_{ii}(m),\;
 N_{ii}(m),\; 0\leq i\leq a-2,$ and $T_{ij},\; 0\leq j<i\leq a-2,$ and
with possible insertions of terms $U_{ik},\; 0\leq i<k\leq a-2.$ By
dropping the endpoint indices $i,j,k$ and the step numbers $m$ in the
composition we obtain a word in the alphabet $P,N,T,U$. The
composition is called \emph{admissible} if only neighbouring pairs of
letters of the form 
\begin{equation}\label{pairs}
PN,\; NP,\; PT,\; TP,\; NT,\; TN,\; PU,\; UN
\end{equation}
occur in the corresponding word.
\end{defn}  

We are now in a position to formulate and prove the desired
decomposition result for walks.
\begin{lem}\label{postringpyr2} Any walk $S_{0j}(m)$, where $0\leq j\leq
a-2$, can be written in a unique way as an admissible composition.
\end{lem}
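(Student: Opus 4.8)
The plan is to prove existence and uniqueness of the admissible decomposition by reading off the structure of the walk $S_{0j}(m)$ directly, generalising the alternating-walk argument of Lemma~\ref{postringpyr1}. Recall that in the dimer case a walk from $0$ to $0$ decomposed uniquely into maximal excursions above and below the axis; here the bookkeeping is more delicate because right-steps have length $a-1$ while left-steps have length $1$, so the walk need not return exactly to the level from which an excursion departs, and the letters $T$ and $U$ are precisely the devices that absorb the resulting level mismatches. First I would establish existence by an explicit greedy construction. Starting from the origin, I would isolate the maximal initial positive sub-walk $P_{00}(m_1)$, that is, the longest initial segment that (after appending the implicit trailing left-steps) lives on the non-negative integers and returns to level $0$. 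Since positive walks begin with a right-step and negative walks end with one, the walk either continues with a right-step (forcing an $N$ factor next, giving the pair $PN$) or is about to descend, in which case a $T$ or $U$ factor records the straight descent before the next positive/negative block begins.

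Next I would argue that the only transitions that can occur between consecutive blocks are exactly the eight pairs listed in \eqref{pairs}, which is what makes the composition admissible. The key observation is that a factor $P_{ii}(m)$ ends with at least $a-1$ consecutive left-steps and a factor $N_{ii}(m)$ begins symmetrically, so two $P$'s or two $N$'s can never abut (such a juxtaposition would have been absorbed into a single larger positive or negative block), which rules out $PP$ and $NN$; the straight walks $T$ and $U$ can neither follow nor precede one another for the analogous length reason, ruling out $TT,TU,UT,UU$; and the asymmetry of $U$ — which by definition deletes trailing left-steps and hence can only be glued after a $P$ or before an $N$ — is precisely what forbids $UP$, $TU$-type mismatches and leaves $PU$ and $UN$ as the only legal appearances of $U$. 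I would verify each excluded pair by checking the step-direction at the junction (right-step versus left-step) and the residual level constraints, using that positive and negative walks are distinguished by whether they start or end with a right-step.

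For uniqueness I would proceed inductively on $m$, or equivalently on the length of the walk, showing that the first factor of any admissible decomposition is forced. The crucial point is that admissibility plus the maximality built into the definitions of $P$ and $N$ pins down where the first block must end: the first return of the walk to its starting level through a left-step, combined with the constraint on which letter may legally follow, determines the initial factor uniquely, after which the remainder $S_{i'j}(m')$ is a strictly shorter walk of the same type and the induction hypothesis applies. I expect the main obstacle to be the careful verification that the level indices $i,j,k$ are consistent throughout — that each junction level lies in the admissible range $0\leq i\leq a-2$ and that the $T$ and $U$ factors correctly reconcile the heights at which positive and negative excursions begin and end. This is the place where the length-$(a-1)$ right-steps genuinely complicate matters compared to the dimer case: one must track how far below its departure level an excursion lands and confirm that the mandated $T_{ij}$ or $U_{jk}$ factor has the right index range rather than overshooting into a negative level or exceeding $a-2$. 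Once this level-bookkeeping is checked, existence and uniqueness follow together from the greedy construction and the induction.
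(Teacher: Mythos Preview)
Your overall intuition — alternating positive and negative excursions, with the letters $T$ and $U$ absorbing level mismatches, and uniqueness by induction — is the right picture. But the sketch contains concrete errors that would derail an actual proof, and the front-to-back greedy scheme does not close as stated.

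First, the step-direction bookkeeping is inverted. A factor $N_{jj}(m)$ must stay at or below level $j$, so its \emph{first} step is necessarily a left-step (a right-step would jump to $j+a-1>j$). Hence after a maximal $P_{00}(m_1)$ the walk cannot ``continue with a right-step forcing an $N$ factor next''; on the contrary, maximality of $P_{00}(m_1)$ forces the next step to be a left-step. Second, $U$ does not ``record a straight descent''. By definition $U_{ik}$ \emph{deletes} the last $k-i$ left-steps of the preceding $P$-factor, so the walk resumes at level $k>i$: it is a level-raising device, not a descent. In particular $T_{ij}$ cannot follow $P_{00}$ at all (since $T$ has initial index $>0$), so your dichotomy after $P_{00}$ collapses.

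The deeper problem is that your induction does not close. Because $U_{0k}$ truncates $P_{00}(m_1)$, the first $am_1$ steps of the composite walk merely \emph{coincide} with $P_{00}(m_1)$ — the last $k$ of those steps actually belong to $N_{kk}$ — and what is left after you peel them off is not the tail of an admissible composition: the remaining composition begins with $U_{0k}N_{kk}\ldots$, which is not itself admissible, and the corresponding residual walk need not be decomposable as an $S_{0j'}(m')$. Concretely, for $a=4$ the walk $0,3,2,1,0,-1,2$ has unique admissible decomposition $P_{00}(1)U_{02}N_{22}(1)$; your greedy rule strips off $0,3,2,1,0$ and leaves $0,-1,2$, and you will not find any admissible composition of that residual starting at level~$0$.

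The paper sidesteps these difficulties by peeling from the \emph{end}. If the last step is a right-step, the final factor must be an $N_{jj}$ (the only letter whose block ends with a right-step), and one strips off the maximal such suffix, leaving a strictly shorter $S_{0j}(m')$ ending in a left-step. If the last step is a left-step, the paper locates the distinguished step $\alpha$ — the last right-step crossing from a negative to a non-negative level — and shows that everything after $\alpha$ is forced to be a cascade of $T$- and $P$-factors (terminated, if necessary, by a single $U$), with $\alpha$ itself being the final step of the preceding $N$. This back-to-front recursion interacts cleanly with $U$ because $U$ can only occur at the very end of such a positive tail, so each peel leaves a genuine $S_{0j'}(m')$ to which the induction hypothesis applies.
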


\begin{proof} Let the walk $S_{0j}(m)$ be given and let us consider the
two possible options, depending on the direction of its last step, separately.

\bigskip

i)~~ If $S_{0j}(m)$ ends with a right-step, any composition of the
claimed type must be of the form
$$
S_{0j}(m) = S'_{0j}(m')N_{jj}(m-m')\,,
$$
where $S'_{0j}(m')$ is either empty or is a composition that ends with
a left-step, since 
walks corresponding to pairs in the list \eqref{pairs} not ending with
$N$ must end with a left-step. On the other hand, there evidently exists a
unique $S'_{0j}(m')$ ending with a left step such that the
decomposition above holds.

\bigskip

ii)~~  If $S_{0j}(m)$ ends with a left-step, any composition of the
claimed type must end with a $P_{jj}(m')$, a $T_{ij}$ or a
$U_{ij}$. 

If it ends with a $P_{jj}(m')$ and is not positive, the composition
must have one of the following three forms:   
\begin{eqnarray*}
S_{0j}(m) &=&
S'_{0j_1}(m_0)N_{j_1j_1}(m_1)T_{j_1j_2}P_{j_2j_2}(m_2)T_{j_2j_3}P_{j_3j_3}(m_3)
\dots T_{j_rj}P_{jj}(m')\,,\quad r\geq 1\,,\\
S_{0j}(m) &=&
S'_{0j_1}(m_0)N_{j_1j_1}(m_1)P_{j_1j_1}(m_2)T_{j_1j_2}P_{j_2j_2}(m_2)T_{j_2j_3}P_{j_3j_3}(m_3)
\dots T_{j_rj}P_{jj}(m')\,,\quad r\geq 1\,,\\
S_{0j}(m) &=&
S'_{0j}(m_0)N_{jj}(m_1)P_{jj}(m')\,,
\end{eqnarray*}
where $S'_{0j}(m_0)$ is either empty or a composition of the claimed
form, because $P$ can only be preceded by $T$ or $N$ and $T$ can only be
preceded by $P$ or $N$. Setting $j_1=j$ in the last case it is seen
that in all three cases the last step in $N_{j_1j_1}(m_1)$ is the last (right)
step, call it $\alpha$, in $S_{0j}(m)$ whose initial point has
negative value and whose final point is non-negative, and hence
belongs to $\{0,1,\dots,a-2\}$. In case  $S_{0j}(m)$ is positive we
must have $j=0$ and  $S_{0j}(m)= P_{jj}(m)$.

That the $P$-and $T$-terms occurring in these compositions are
uniquely determined can be seen as follows. Consider the step $\alpha$
defined above with endpoint $j_1$. If the subsequent step is a
left-step there exists a $j_2<j_1$ such that $\alpha$ is followed by
$T_{j_1j_2}$ and then by a right-step. This right-step is the initial
step of a unique positive walk $P_{j_2j_2}(m_2)$ that is followed by a
left-step, unless it equals $P_{jj}(m')$. If not, the argument can 
then be repeated. If the first step after $\alpha$ is a right-step, it
is the initial step of a unique positive walk $P_{j_2j_2}(m_2)$ that is
followed by a left-step, unless it equals $P_{jj}(m')$. Now continue
as previously until all $P$- and $T$-terms have been determined. 

In case the composition ends with a $T_{ij}$ or a $U_{ij}$,
essentially the same argument can be applied to establish
uniqueness of the factors subsequent to the step $\alpha$ defined
above. If $\alpha$ does not exist, i.e. if $S_{0j}(m)$ is positive,
the unique composition of the claimed type must in this case be
$S_{0j}(m) = P_{00}(m)U_{0j}$.

To establish existence of the composition for the part of $S_{0j}(m)$
subsequent to $\alpha$ one can proceed along the same lines just
explained concerning uniqueness. Indeed, if $\alpha$ is followed by a
left-step there must 
exist a non-negative $j_2<j_1$ such that $\alpha$ is followed by
 $T_{j_1j_2}$, which is then followed by a right-step. This right-step is
the first step of a $P_{j_2j_2}(m_2)$. Choosing $m_2$ maximal, it
follows that $P_{j_2j_2}(m_2)$ is either followed by a 
left-step, in which case the construction can be repeated, or the 
end of $P_{j_2j_2}(m_2)$ coincides with that of $S_{0j}(m)$, in which
case the construction is finished, or the end of $P_{j_2j_2}(m_2)$
exceeds that of $S_{0j}(m)$ by $j-j_2>0$ left-steps, in which case these
are annihilated by inserting $U_{j_2j}$ at the end. 

The case where $\alpha$ is followed by a right-step is treated in the
same way.

Together, i) and ii) prove the assertion of the lemma by induction. 
\end{proof}

\begin{figure}
\begin{center}
\includegraphics[width=11cm]{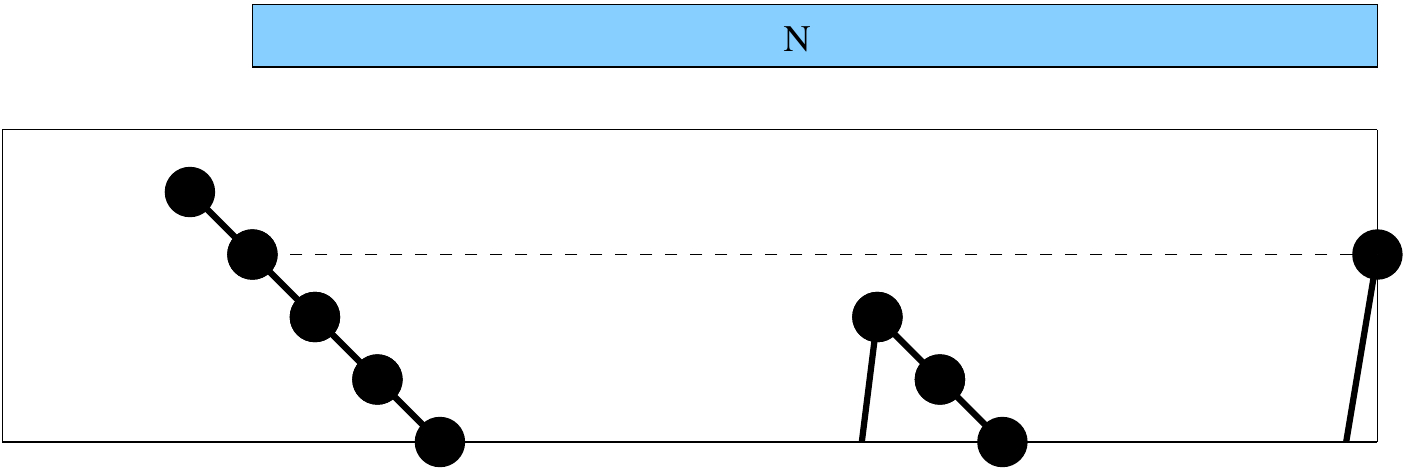}
\end{center}
\caption{Case i) of Lemma \ref{postringpyr2}: String ending in $N$}\label{grafN}
\end{figure}

\begin{figure}
\begin{center}
\includegraphics[width=16cm]{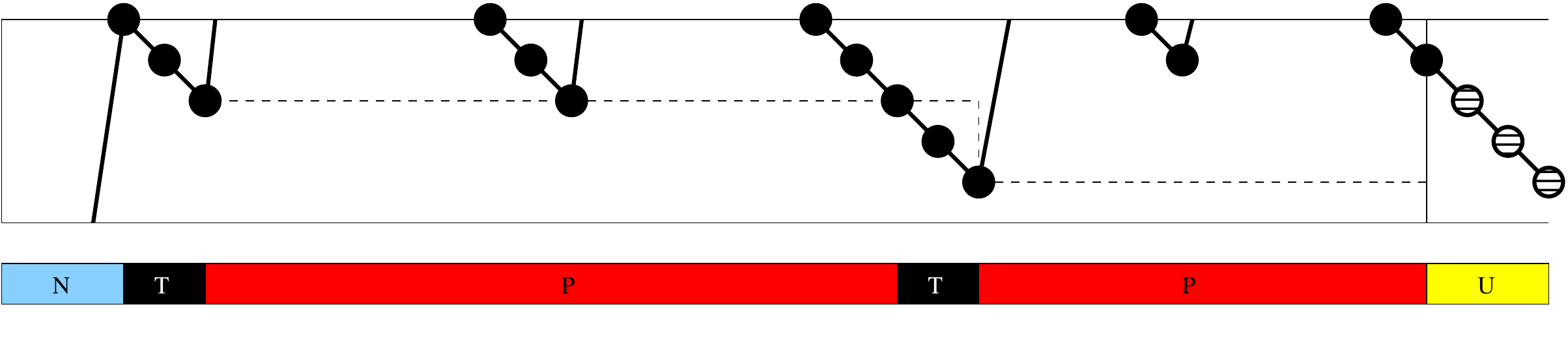}
\end{center}
\caption{Case ii) of Lemma \ref{postringpyr2}: String ending in $PU$}\label{grafT}
\end{figure}

\begin{cly}\label{decomp3}
There is a bijective correspondence between walks of length $am$
starting at $0$ with 
a right-step and ending at $0$, and admissible compositions with
initial term $P_{0j_1}(m_1),\;0\leq j_1\leq a-2,\,m_1\geq 1,$ and  final term
$P_{00}(m_r),\,N_{00}(m_r),\; m_r\geq 1,$ or $T_{j_r0},\; 1\leq
j_r\leq a-2$, where $m_1+\dots+m_r=m$. 
\end{cly}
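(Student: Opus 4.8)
The plan is to obtain the corollary as the specialization of Lemma~\ref{postringpyr2} to the case $j=0$, supplemented by reading off the admissible form of the first and last terms once the extra hypothesis that the walk begins with a right-step is imposed. First I would record that a walk of length $am$ starting and ending at $0$ necessarily has exactly $m$ right-steps and $(a-1)m$ left-steps: its vanishing net displacement $m'(a-1)-\ell=0$ together with $m'+\ell=am$ forces $m'=m$. Hence these walks are precisely the walks $S_{00}(m)$ to which Lemma~\ref{postringpyr2} applies, and that lemma already furnishes a unique admissible composition for each of them. The remaining content of the corollary is then only to pin down which admissible compositions arise under the two constraints ``starts with a right-step'' and ``ends at $0$''.

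For the initial term I would establish the equivalence that the walk begins with a right-step if and only if its unique admissible composition begins with a positive block. The first term must start at $0$; a term $T_{0j}$ is impossible since $T_{ij}$ requires $j<i$, and a $U$-term cannot be initial, being a deletion operator that presupposes a preceding walk (consistently,~\eqref{pairs} permits $U$ only when preceded by $P$). A negative block $N_{00}(m_1)$ is the inverse of a positive walk $P_{00}(m_1)$, hence a non-trivial walk on the non-positive integers, and so opens with a left-step; thus a composition beginning with $N$ yields a walk opening with a left-step. This leaves exactly the positive blocks, which by definition begin with a right-step. The leading positive block, together with a possible immediately following $U_{0j_1}$ (the only letter that may follow $P$ besides $N$ and $T$, and the one that raises the endpoint), is the positive walk from $0$ to $j_1$ abbreviated $P_{0j_1}(m_1)$, with $0\le j_1\le a-2$ and $m_1\ge 1$; conversely, any such leading term opens with a right-step.

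For the final term I would use that the composition ends at $0$: a $U$-term cannot be final, since~\eqref{pairs} forces every $U$ to be followed by an $N$ (and $U_{ik}$ raises the endpoint to $k>0$ in any case); an ending $T_{j_r0}$ requires $1\le j_r\le a-2$ by the constraint $0\le j<i\le a-2$; and the only remaining possibilities are $P_{00}(m_r)$ or $N_{00}(m_r)$ with $m_r\ge 1$. The identity $m_1+\dots+m_r=m$ is then just the statement that the total number $m$ of right-steps is distributed among the positive and negative blocks, the $T$- and $U$-insertions contributing no right-steps. Since Lemma~\ref{postringpyr2} supplies both existence and uniqueness of the admissible composition, and since every composition of the described shape composes back to a walk of length $am$ from $0$ to $0$ opening with a right-step, these observations upgrade the lemma to the asserted bijection.

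The step I expect to require the most care is reconciliation of notation rather than any genuine difficulty: the corollary writes the initial term as $P_{0j_1}(m_1)$ with $j_1$ possibly positive, whereas Definition~\ref{admis} provides only diagonal positive blocks $P_{ii}(m)$, so one must be explicit that $P_{0j_1}(m_1)$ abbreviates $P_{00}(m_1)U_{0j_1}$ and verify that this composite is indeed a positive walk from $0$ to $j_1$ beginning with a right-step. The only other point needing brief justification is the claim that $N_{00}$ opens with a left-step, which follows from the description of the inverse walk---reflection in $0$ turns a terminal left-step of $P_{00}$ into an initial left-step of $N_{00}$---together with the noted fact that every $P_{ii}$ ends with at least $a-1$ left-steps.
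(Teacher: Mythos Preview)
Your proposal is correct and is exactly the elaboration the paper leaves implicit: the corollary is stated without proof immediately after Lemma~\ref{postringpyr2}, and what you have done is spell out why specializing that lemma to $j=0$ and imposing ``first step is a right-step'' forces the initial and final terms to take the listed forms. Your identification of $P_{0j_1}(m_1)$ as shorthand for $P_{00}(m_1)U_{0j_1}$ is the intended reading and is consistent with how the corollary is used in the matrix computation of Lemma~\ref{exp}.

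One small correction: your first reason for excluding a terminal $U$---that \eqref{pairs} forces every $U$ to be followed by an $N$---is not quite right, since Definition~\ref{admis} only constrains neighbouring pairs and says nothing about a trailing $U$ with no successor; indeed, the proof of Lemma~\ref{postringpyr2} explicitly produces $P_{00}(m)U_{0j}$ as a complete admissible composition when $j>0$. Your parenthetical reason is the correct one here: a terminal $U_{ik}$ would leave the endpoint at $k>0$, contradicting $j=0$. With that clause promoted to the main argument, the proof is clean.
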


\section{Proof of Theorem~\ref{thm}}
\label{proofthm}
The main purpose of this section is to give a proof of
Theorem~\ref{thm} by exploiting the decomposition results of the
preceding section. 

For $m\geq 1$, we let $A_m$ denote the number of positive walks $P_{ii}(m)$,
which obviously is independent of $i\in\mathbb Z$ and also equals the
number of negative walks $N_{ii}(m)$. Moreover, $A_m$
also equals the number of right, respectively left, $s$-pyramids of
size $m$ as a consequence of  Lemma~\ref{stringpyr2}.

By Lemma~\ref{decomp2} it follows that the number $B_m$ of all
$0$-pyramids of size $m$ can be written as
\begin{equation}\label{AB}
B_m = \sum_{r\geq 1}\sum_{m_1+\dots+m_r=m} (a-1)^{r-1}A_{m_1}\dots A_{m_r},\,
\end{equation}
since the number of possible choices of the sequence $(s_1,\dots,s_r)$
in Lemma~\ref{decomp2} is $(a-1)^{r-1}$.

On the other hand, it follows from Corollary~\ref{decomp3} that the
number of walks of length $am$ starting at $0$ with a right-step and
ending at $0$ can be written in the form
$$
\sum_{r\geq 1}\sum_{m_1+\dots+m_r=m} a_r\,A_{m_1}\dots A_{m_r},\,
$$
where $r$ denotes the total number of $P$- and $N$-terms, with sizes
$m_1,\dots,m_r\geq 1$, in a 
composition and the factor $a_r$ counts the number of admissible compositions
subject to the boundary conditions specified in
Corollary~\ref{decomp3} for fixed $r$ and $m_1,\dots,m_r$. As
indicated, this number only depends on $r$. Of
course, the total number of walks of length $am$ starting at $0$ with a
right-step and ending at $0$ equals $am-1\choose m-1$. Thus the proof
of  Theorem~\ref{thm} will be completed by proving the following
lemma.
\begin{lem}\label{exp} For every $a\geq 3$ we have
$$ a_r = (a-1)^{r-1}\,,\;\quad r\geq 1\,.
$$
\end{lem}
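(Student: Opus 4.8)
The plan is to count admissible compositions combinatorially by analysing which letters from the alphabet $\{P,N,T,U\}$ can legally follow the $P$- and $N$-terms. I want to show that once the sequence of $r$ positive/negative walks is fixed (each contributing a factor $A_{m_i}$), the number of ways to interleave the connecting $T$- and $U$-terms, subject to the adjacency constraints in \eqref{pairs} and the boundary conditions of Corollary~\ref{decomp3}, is exactly $(a-1)^{r-1}$.

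First I would reformulate the problem in terms of the endpoint indices. Each $P$- or $N$-term carries an index $i\in\{0,1,\dots,a-2\}$, and the connecting terms $T_{ij}$ (with $0\le j<i\le a-2$) and $U_{ik}$ (with $0\le i<k\le a-2$) record the change of index between consecutive walks. The key observation is that, reading off the allowed pairs in \eqref{pairs}, a $P$-term must be followed by either an $N$ (pair $PN$), a $T$ (pair $PT$), or a $U$ (pair $PU$); and an $N$-term must be followed by a $P$ (pair $NP$) or a $T$ (pair $NT$). Since $T$ and $U$ terms are themselves forced to be flanked by $P/N$ terms, I would show that between each consecutive pair of $P/N$ walks there is at most one connecting $T$ or $U$ term, and that the transition is determined by a choice of how the index shifts. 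The constraint that the starting index is some $j_1$ and the final term returns the walk to $0$ (as in Corollary~\ref{decomp3}) pins down the two endpoints.

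The heart of the argument is then a transfer-matrix or direct counting computation: I would argue that for each of the $r-1$ ``junctions'' between consecutive $P/N$-terms, there are exactly $a-1$ admissible choices of connecting data (the index shift together with the sign pattern $PN,NP,PT,TP,NT,TN,PU,UN$), independent of the choices at other junctions, and that the boundary terms contribute a single factor. Concretely, I expect the count to reduce to showing that the transition from index $i$ at the end of one walk to index $i'$ at the start of the next, compatible with the sign change and the legal pairs, admits precisely $a-1$ possibilities in total; summing over the internal indices $j_1,\dots$ and using $|\{0,\dots,a-2\}|=a-1$ should produce the clean power $(a-1)^{r-1}$. This matches the independent count in \eqref{AB} coming from the pyramid side via Lemma~\ref{decomp2}, which is the consistency one expects.

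The main obstacle will be the careful bookkeeping of the boundary cases and the role of the $U$-terms, which behave asymmetrically from $T$-terms: $U$ may only appear in the pairs $PU$ and $UN$, so it is confined to transitions from a positive to a negative walk, whereas $T$ mediates the other transitions. I would need to verify that no junction is over- or under-counted — in particular that the degenerate situations (a positive walk $P_{00}(m)$ at the very start, or the $U_{0j}$/$T_{j_r0}$ terminal adjustments forcing the walk back to $0$) are each accounted for exactly once. I anticipate that organising the proof around the identity that the number of legal $(i,i',\text{sign})$ transitions equals $a-1$ regardless of the incoming sign, and then invoking independence across the $r-1$ junctions, will be the cleanest route; verifying that claimed independence, rather than the arithmetic, is where the real care is required.
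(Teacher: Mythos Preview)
Your proposal has a genuine gap at its core claim: it is \emph{not} true that each of the $r-1$ junctions between consecutive $P/N$-terms admits exactly $a-1$ legal continuations independently of the current state. If you set up the transfer matrix $\mathcal{A}$ on the state space $\{0,1,\dots,a-2\}\times\{P,N\}$ (as the paper does), you find that from state $(i,P)$ there are $1 + 2i + (a-2-i) = (a-1)+i$ legal successors (one via $PN$, $2i$ via $PT$ followed by $TP$ or $TN$, and $a-2-i$ via $PU$ followed by $UN$), while from state $(i,N)$ there are only $1+2i$ legal successors. Already for $a=3$ the row sums of $\mathcal{A}$ are $2,1,3,3$, not constantly $2$; so the ``independence across junctions'' argument you propose cannot work as stated.

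The paper's proof proceeds exactly by writing $a_r = (1,0,\dots,0)\,\mathcal{A}^{r-1}\,(1,1,\dots,1)^T$ and then analysing $\mathcal{A}$ spectrally: its eigenvalues are $b=a-1$, $0$, and $-1$, each with geometric multiplicity one. The crucial (and non-obvious) fact is that the all-ones vector decomposes as $b^{-1}e$ plus a vector lying in the generalised kernel of $\mathcal{A}$ and whose images under powers of $\mathcal{A}$ all have vanishing first coordinate; here $e$ is the eigenvector for eigenvalue $b$ with first coordinate $b$. This is precisely what produces the clean power $b^{r-1}$ despite the row sums being non-constant. Your sketch correctly identifies that a transfer-matrix formulation is natural, but the step from ``transfer matrix'' to ``$(a-1)^{r-1}$'' requires this spectral decomposition, not an independence argument.
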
 

%

\begin{proof} We use standard matrix techniques.
Set $b:=a-1\geq 2$ in the following and define the $2b\times 2b$-matrices
${\cal E},\,{\cal T}$ and $\cal U$ by
\begin{equation*}
{\cal E} = {\bf E}\otimes \begin{pmatrix}0~~ 1\\ 1~~ 0 \end{pmatrix}\;,\qquad
{\cal T} = {\bf T}\otimes \begin{pmatrix}1~~ 1\\ 1~~ 1 \end{pmatrix}\;,\qquad
{\cal U} = {\bf U}\otimes \begin{pmatrix}0~~ 1\\ 0~~ 0\end{pmatrix}\;,
\end{equation*}
where ${\bf E}$ is the $b\times b$ unit matrix, ${\bf T}$
is the lower triangular  $n\times n$-matrix with $1$'s below the
diagonal and $0$'s 
elsewhere, and ${\bf U}$ is the transpose of ${\bf T}$. We
label the rows and columns of the first factor in the tensor products
by $i,j\in\{0,1,\dots,a-2\}$ and those of the second factor by
$R,S\in\{P,N\}$. Moreover, we shall use the ordering
$0P,\; 0N,\; 1P,\; 1N, \dots,\; (a-2)P,\; (a-2)N$ of double indices,
thus implying the
standard identification of the tensor product of a $b\times b$-matrix
and a $2\times 2$-matrix with a $2b\times 2b$-matrix.

By construction, ${\cal E}_{iR,jS}=1$ if and only if
$R_{ii}(m)S_{jj}(m')$ can occur in an admissible composition, i.e. if
$i=j$ and $R\neq S$ according to Definition~\ref{admis}. Similarly, 
${\cal T}_{iR,jS}=1$ if and only if $R_{ii}(m)T_{ij}S_{jj}(m')$ is
allowed, and ${\cal U}_{iR,jS}=1$ if and only if $R_{ii}(m)U_{ij}S_{jj}(m')$ is
allowed.  Viewing an admissible composition as a
chain of links of one of the forms
$R_{ii}(m)S_{jj}(m'),\;R_{ii}(m)T_{ij}S_{jj}(m')$ or
$R_{ii}(m)U_{ij}S_{jj}(m')$, a chain of
$r-1$ links contains a total of $r$ $P$- and $N$-terms.  Expanding
the power $({\cal E} + {\cal T}+{\cal U})^{r-1}$ we hence get
$$
a_r = \left\{({\cal E} + {\cal T}+{\cal U})^{r-1}\right\}_{0P,0P} +
\left\{({\cal E} + {\cal T}+{\cal U})^{r-1}\right\}_{0P,0N} +\sum_{j,S}
\left\{({\cal E} + {\cal T}+{\cal U})^{r-1}\right\}_{0P,jS}{\bf T}_{j0}\,,
$$
where the three terms correspond to the three possible types of final terms in
the compositions specified in Corollary~\ref{decomp3}. Since
${\bf T}_{j0}=1$ if $1\leq j\leq a-2$ and
${\bf T}_{00}=0$, the result can be rewritten as
$$
a_r = \begin{pmatrix}1~0~0\dots 0\end{pmatrix}({\cal E} + {\cal
T}+{\cal U})^{r-1}\begin{pmatrix}1\\1\\\vdots\\ 1\end{pmatrix}\,. 
$$

Before using this to evaluate $a_r$ in general it is instructive first to 
consider the case $a=3$ explicitly. In this case 
\begin{eqnarray*}
{\cal E} &=& \begin{pmatrix}1~~ 0\\ 0~~ 1 \end{pmatrix}\otimes
\begin{pmatrix}0~~ 1\\ 1~~ 0 \end{pmatrix} = \begin{pmatrix}0~~ 1~~
0~~ 0\\ 1~~ 0~~ 0~~ 0\\0~~ 0~~ 0~~ 1\\0~~ 0~~ 1~~ 0 \end{pmatrix} \\
{\cal T} &=& \begin{pmatrix}0~~ 0\\ 1~~ 0 \end{pmatrix}\otimes \begin{pmatrix}1~~ 1\\ 1~~ 1 \end{pmatrix} = \begin{pmatrix}0~~ 0~~
0~~ 0\\ 0~~ 0~~ 0~~ 0\\1~~ 1~~ 0~~ 0\\1~~ 1~~ 0~~ 0 \end{pmatrix}\\
{\cal U} &=& \begin{pmatrix}0~~ 1\\ 0~~ 0 \end{pmatrix}\otimes \begin{pmatrix}0~~ 1\\ 0~~ 0\end{pmatrix} = \begin{pmatrix}0~~ 0~~
0~~ 1\\ 0~~ 0~~ 0~~ 0\\0~~ 0~~ 0~~ 0\\0~~ 0~~ 0~~ 0 \end{pmatrix}
\end{eqnarray*} 
such that 
$$
a_r = \begin{pmatrix}1~ 0~ 0~ 0\end{pmatrix}\begin{pmatrix}0~~ 1~~
0~~ 1\\ 1~~ 0~~ 0~~ 0\\1~~ 1~~ 0~~ 1\\1~~ 1~~ 1~~ 0 \end{pmatrix}^{r-1}\begin{pmatrix}1\\1\\1\\ 1\end{pmatrix}\,. 
$$
The characteristic polynomial of the $4\times 4$-matrix entering this
expression is found to be
$$
p_2(\lambda) = \lambda(\lambda -2)(\lambda+1)^2
$$
and hence its eigenvalues are $2,\,0$ and $-1$. The eigenvalue
multiplicities are seen to be $1$ such that the matrix is not
diagonalisable.
 
In order to determine $a_r$ we define 
$$
v_r = \begin{pmatrix}a_r\\a'_r\\b_r\\b'_r\end{pmatrix} = \begin{pmatrix}0~~ 1~~
0~~ 1\\ 1~~ 0~~ 0~~ 0\\1~~ 1~~ 0~~ 1\\1~~ 1~~ 1~~ 0 \end{pmatrix}^{r-1}\begin{pmatrix}1\\1\\1\\ 1\end{pmatrix}\,, 
$$
which fulfills the recursion relations
\begin{eqnarray}
b_{r+1}-a_{r+1} &=& a_r \qquad~~~~~~  b'_{r+1}-a'_{r+1} = a'_r + b_r \label{rec1}\\
b'_{r+1}-b_{r+1} &=& b_r-b'_r \qquad~~~~~~~~~  a'_{r+1} = a_r\,.  \label{rec2}
\end{eqnarray}
for all $r\geq 1$. Since $b_1=b'_1=1$ we get from the first equation
in \eqref{rec2} that $b_r=b'_r$ for $r\geq 1$. The first equation
in \eqref{rec1} and the second in \eqref{rec2} imply
\begin{equation}\label{rec3}
b_r = a_r + a'_r\,,\quad r\geq 2\,.
\end{equation}
Combining this with the last equations in \eqref{rec1} and
\eqref{rec2} then gives
$b_{r+1} = 2b_r\,, r\geq 2\,,$
and hence
$$
b_r = b_2\cdot 2^{r-2} = 3\cdot 2^{r-2}\,,\quad r\geq 2\,.
$$
Inserting this into the first equation in \eqref{rec1} yields
$$
a_{r+1}+a_r = 3\cdot 2^{r-1}\,,\quad r\geq 1\,.
$$
The solution to this equation with initial condition $a_1=1$ is 
$$
a_r = 2^{r-1}\,,\quad r\geq 1\,,
$$
as desired.

We now return to the general case $a\geq 3$. Setting ${\cal A}={\cal
E} + {\cal T}+{\cal U}$ one can write ${\cal
A} = {\cal X} + {\cal Y}$ where ${\cal X}$ is the lower triangular
$2b\times 2b$-matrix whose matrix elements below the diagonal equal
$1$ and are otherwise $0$, and ${\cal B}$ is the upper triangular
matrix with matrix elements equal to $1$ in slots with even row and
column indices above the diagonal and $0$ elsewhere, that is 
$$ 
{\cal A} = \begin{pmatrix} 0~~ 1~~ 0~~ 1~~ 0~~ 1 \dots 1~~ 0~~ 1\\
 1~~ 0~~ 0~~ 0~~ 0~~ 0 \dots 0~~ 0~~ 0\\
1~~ 1~~ 0~~ 1~~ 0~~ 1 \dots 1~~ 0~~ 1\\
1~~ 1~~ 1~~ 0~~ 0~~ 0 \dots 0~~ 0~~ 0\\
\vdots~~~~~~~~~~~~~~~~~~~~~~~~~~~~~~\vdots\\
1~~ 1~~ 1~~ 1~~ 1~~ 1 \dots 1~~ 0~~ 1\\
1~~ 1~~ 1~~ 1~~ 1~~ 1 \dots 1~~ 1~~ 0\end{pmatrix}
$$
It is now easy to show by induction w.r.t. $b$ that the characteristic
polynomial $p_b$ of ${\cal A}$ is given by
$$
p_b(\lambda) = \lambda^{b-1}(\lambda -b)(\lambda +1)^b\,.
$$
Hence the eigenvalues of ${\cal A}$ are $0, b$ and $-1$, and it is
readily seen that they all have eigenvalue multiplicity equal to
$1$. We denote by $e$ the eigenvector with eigenvalue $b$ normalised
such that its first coordinate is $b$. One finds
$$
e = (\,b\,,\; 1\,, \;b\zeta\,, \;2\zeta\,, \;b\zeta^2, \;3\zeta^2,\;\dots\;,
\;b\zeta^{b-2},\;(b-1)\zeta^{b-2}, \;b\zeta^{b-1},\;b\zeta^{b-1}\,)\,,
$$
where
$$
\zeta = 1+b^{-1}\,.
$$

Next, define 
\begin{equation}\label{minusone}
f_i := (0,\;\dots\;,\;0,\;-i,\;1,\;1,\;\dots\,,1)\,,\quad 1\leq i\leq
b-1\,,
\end{equation}
where $-i$ is the $2(b-i)$'th coordinate (such that the number of $1$'s is
$2i$). Then $f_1$ belongs to the kernel of ${\cal A}$ and one finds by
direct computation that
\begin{equation}\label{minusto}
{\cal A} f_i = f_1 + f_2 + \dots + f_{i-1}\,,\quad 2\leq i\leq b-1\,.
\end{equation}
It follows that the kernel of ${\cal A}^{b-1}$ is spanned by the
vectors $f_1,\;\dots\;,f_{b-1}$. By a straight-forward calculation one
finds
$$
(1,\;1,\;1,\;\dots\; 1) = b^{-1}\left(e - \sum_{i=1}^{b-1}
\zeta^{b-1-i} f_i\right)\;.
$$
From this we conclude that
\begin{eqnarray*}
a_r &=& b^{-1}\begin{pmatrix}1~0~0\dots 0\end{pmatrix}{\cal
A}^{r-1}\left(e - \sum_{i=1}^{b-1}\zeta^{b-1-i} f_i\right)\\
&=&   b^{-1}\begin{pmatrix}1~0~0\dots 0\end{pmatrix}{\cal
A}^{r-1}\,e\\
&=& b^{-1}b^{r-1}\,\begin{pmatrix}1~0~0\dots 0\end{pmatrix}e\\ &=& b^{r-1}\qquad \mbox{for $r\geq 1$},
\end{eqnarray*}
where, in the second step, we have used \eqref{minusone} and
\eqref{minusto} and the fact that $f_i$ has vanishing first coordinate
for all $i=1,\dots,b-1$. 

Recalling the definition of $b$ the lemma is proven.
\end{proof}

\begin{rem}
As previously mentioned, the method used in this section to determine
the number $B_m$ of pyramids 
of size $m$ as given by Theorem~\ref{thm} did not require knowing the
number $A_m$ of right-pyramids of size $m$, as given by
Corollary~\ref{cor}.  For $a=2$ the use of generating function
techniques as in \cite{bousrech} proceeds by first determining the
generating function $A(t)$ for the $A_m$ and 
then using a simple algebraic relation between this function and the
generating function  
$$
 B(t) =\sum_{m=1}^\infty B_m\, t^m
$$
for the $B_m$. For general $a\geq 2$, this relation is a special case
of eq. \eqref{A-B} below and takes the form
\begin{equation}\label{A-B0}
B(t) = \frac{A(t)}{1-(a-1)A(t)}\,,
\end{equation}
For $a=2$ the quadratic relation \eqref{recursion} satisfied by $A(t)$
has a simple 
solution which easily yields the $B_m$ in closed form when inserted
into \eqref{A-B0}.
 For $a>2$ such a procedure does not seem feasible.
\end{rem}

\bigskip

We conclude this section by determining the asymptotic behaviour of the
average width of pyramids of large size. For this purpose we first
note that the asymptotic behaviour of 
$$
B_m = {am-1\choose m-1}
$$ 
is readily obtained from Stirling's formula and is given by
\begin{equation}\label{Basymp}
B_m\; \sim\; \frac{1}{\sqrt{2\pi a(a-1)
m}}\left(\frac{a^a}{(a-1)^{a-1}}\right)^m\,,\qquad m\to\infty\,.
\end{equation}
Next, let $B_{m,n}$ denote the number of pyramids of size $m$ and
 left width $n$, where the \emph{left width} of a pyramid $p$ with bottom piece
covering the interval $]0,a[$ equals $n$ if the leftmost interval
covered by a piece in $p$ is $]-n,a-n[$, and let
$$
B(t,v) = \sum_{n\geq 0,m\geq 1} B_{n,m} t^mv^n
$$ 
be the corresponding generating function. Now recall the decomposition,
 in the proof of \linebreak Lemma~\ref{decomp2}, of a pyramid $p$
into a right $0$-pyramid $p_1$ and an arbitrary pyramid $p'$ with bottom
piece covering the interval $]s_2-a,s_2[$, where $1\leq s_2\leq
a-1$, or $p'$ may be empty. This is seen to imply the relation
$$
 B(t,v) = A(t)(1+ (v+v^2+\dots+v^{a-1})B(t,v))
$$
between $A(t)$ and $B(t,v)$, that is 
\begin{equation}\label{A-B}
B(t,v) = \frac{A(t)}{1-(v+v^2+\dots+v^{a-1})A(t)}\,.
\end{equation} 
In particular, the relation \eqref{A-B0} is obtained for $v=1$.
 
Moreover, differentiating eq. \eqref{A-B} with respect to $v$ and setting
$v=1$ we get 
 \begin{equation}\label{B-C}
C(t) = \frac 12 a(a-1)B(t)^2 \,.
\end{equation}
where $C(t)$ is the generating function with coefficients
$$
C_m = \sum_{n\geq 0} n\,B_{m,n}\,.
$$
The asymptotic behaviour of $C_m$ can now be obtained by standard
singularity analysis. Indeed, from the polynomial equation
\eqref{recursion} satisfied by $A(t)$ we conclude that the
singularity of $A(t)$ closest to the origin is at 
$$
t_0 = \frac{(a-1)^{a-1}}{a^a}
$$
and that 
$$
A(t_0)-A(t)\;\sim\; c_0 \left(1-\frac{t}{t_0}\right)^{\frac
12}\,,\qquad t\to t_0\,,
$$
where
$$
c_0 = (a-1)^{-2}\sqrt{2a(a-1)}\quad\mbox{and}\quad A(t_0) = (a-1)^{-1}\,.
$$    
Using \eqref{A-B0} and \eqref{B-C} one obtains
$$
C(t)\;\sim\; \frac 14\, \left(1-\frac{t}{t_0}\right)^{-1}\,,\qquad
t\to t_0\,,
$$ 
and hence \cite{flaod}
\begin{equation}\label{Casymp}
C_m\;\sim\; \frac 14\, t_0^{-m} = \frac 14\,\left(
\frac{a^a}{(a-1)^{a-1}}\right)^m \,,\qquad m\to\infty\,. 
\end{equation}
For the average left width with respect to the uniform distribution of pyramids
of size $m$ we conclude from \eqref{Basymp} and \eqref{Casymp} that
$$
\frac{C_m}{B_m}\;\sim\; \frac 14 \sqrt{2\pi a(a-1)m}\,,\qquad m\to\infty\,.
$$
Hence we have proven the following result for the average width which
equals twice the average left width plus $a$.
\begin{prop}\label{averasymp}
The average width of pyramids of size $m$ is asymptotic to 
$$
 \sqrt{\frac{\pi}2 a(a-1)m}
$$
for $m\to\infty$.
\end{prop}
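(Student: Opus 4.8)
The plan is to reduce the statement to the asymptotics of the average left width, for which the generating functions already assembled above do all the work. Writing the mean left width of a uniformly chosen pyramid of size $m$ as $C_m/B_m$, with $C_m=\sum_{n\ge 0} n\,B_{m,n}$, the relation \eqref{B-C} expresses $C(t)$ purely through $B(t)$, and \eqref{A-B0} in turn expresses $B(t)$ through $A(t)$. The entire computation therefore hinges on the behaviour of $A(t)$ at its dominant singularity, with everything downstream being a transfer of that behaviour to the coefficients followed by an application of Stirling's formula.

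Concretely, I would first read off from the algebraic equation \eqref{recursion} that the singularity of $A(t)$ nearest the origin lies at $t_0=(a-1)^{a-1}/a^a$, that it is a square-root branch point, and that $A(t_0)=(a-1)^{-1}$ with $A(t_0)-A(t)\sim c_0(1-t/t_0)^{1/2}$. Since $1-(a-1)A(t_0)=0$, equation \eqref{A-B0} forces $B(t)$ to diverge like $(1-t/t_0)^{-1/2}$ at $t_0$, so by \eqref{B-C} the function $C(t)=\tfrac12 a(a-1)B(t)^2$ acquires a simple pole there; inserting the explicit value of $c_0$ pins the residue down to $C(t)\sim\tfrac14(1-t/t_0)^{-1}$, and the transfer theorems of singularity analysis then give $C_m\sim\tfrac14 t_0^{-m}$, which is \eqref{Casymp}.

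Dividing \eqref{Casymp} by the Stirling estimate \eqref{Basymp} yields the average left width $C_m/B_m\sim\tfrac14\sqrt{2\pi a(a-1)m}$. To pass to the full width I would invoke left--right symmetry: the bottom piece covers $]0,a[$, the configuration extends its left width to the left and, with the identical distribution, a right width to the right, so that the width equals twice the left width plus the constant $a$. Hence the average width is $2(C_m/B_m)+a$; the additive constant $a$ is negligible against the $\Theta(\sqrt m)$ growth, and $2\cdot\tfrac14\sqrt{2\pi a(a-1)m}=\sqrt{\tfrac{\pi}{2}a(a-1)m}$, which is exactly the claimed asymptotic.

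The main obstacle is the singularity-analysis step, and specifically the verification that $C(t)$ has a genuine simple pole with coefficient exactly $\tfrac14$. This needs the precise square-root expansion of $A(t)$ at $t_0$, not merely the location of $t_0$, together with a check that the subleading terms in \eqref{A-B0} and \eqref{B-C} do not perturb the leading singular part; once the $\tfrac14(1-t/t_0)^{-1}$ behaviour is in hand, the remaining manipulations are routine.
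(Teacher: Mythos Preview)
Your proposal is correct and follows essentially the same route as the paper: locate the square-root singularity of $A(t)$ at $t_0=(a-1)^{a-1}/a^a$ via \eqref{recursion}, push it through \eqref{A-B0} and \eqref{B-C} to obtain $C(t)\sim\tfrac14(1-t/t_0)^{-1}$, transfer to coefficients, divide by \eqref{Basymp}, and double the resulting mean left width. The only minor imprecision is the phrasing ``the width equals twice the left width plus $a$''---this holds for the \emph{averages} by left--right symmetry, not pyramid by pyramid---but you invoke the symmetry correctly and the conclusion is unaffected.
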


\bigskip

\section{Concluding remarks}
As noted in the introduction the pyramids under consideration in this
paper may be considered as special planar LEGO structures built from
$1\times a$ LEGO pieces. More specifically, consider the number
$L_m^a$ of contiguous LEGO structures made out of $m$ $1\times a$
pieces which are "flat" in the sense that all pieces are contained in
the same vertical plane, and such that there is a unique piece in the
lowest level of the structure. Pictorially speaking, the difference
between that case and the one studied here is that pieces are allowed
to hang underneath other pieces from the second level of the structure
and upwards. These numbers turn out to be hard to compute; in
\cite{abeil1} some of them have been calculated for $1\leq
a\leq 8$ and are reproduced in The Online Encyclopaedia of Integer
Sequences \cite{encseq}. 

In \cite{dureil} we have shown, using standard
concatenation arguments, that the exponential growth rate 
$$
g_a = \lim_{m\to\infty}\frac{\ln L_m^a}{m}
$$
is well defined and finite (and that this, in fact, holds also for more
general non-planar classes of LEGO structures). Obviously, the
asymptotic relation \eqref{Basymp} gives  
\begin{equation}\label{lowasymp}
h_a\,\geq\, \frac{a^a}{(a-1)^{a-1}}\;\sim\;  e^{-1}(a-1)
\end{equation}
for large $a$, where we have set $h_a=e^{g_a}$.
This lower bound appears to be rather tight; it
is, indeed, at present our best lower bound for general $a$. It
may be improvable by known techniques for fixed and relatively small
values of $a$. For instance, for $a=2$ one can appeal to the
enumeration of multi-pyramids in \cite{bousrech} to get $h_2\geq
9/2$. By adapting the method of \cite{dureil} to this setting we can
improve this lower estimate further to $h_2\geq 4.607$ by computing
the number $c_m$ of "fat" structures up to level $16$ and proving
$c_{m+2}\geq 5c_m$ for all $m$.

Upper bounds on $h_a$ can be produced by adapting the method of
\cite{klariv} to this setting. Performing an analysis of depth 1 we
can prove that, for all $a$, the largest root of 
\begin{align*}
(\tfrac 14 a^9 -\tfrac 45 a^8 +\tfrac{21}{8}a^7 -3 a^6 +2 a^5 -\tfrac 34
a^4+\tfrac 18 a^3)x^5 +\\
(-3a^8 +\tfrac{77}{4}a^7 -\tfrac{105}{2}a^6
+\tfrac{159}{2}a^5-73a^4+\tfrac{165}{4}a^3 -\tfrac{27}{2}a^2 +2a)x^4+\\
(-\tfrac{47}{8}a^7+27a^6
-\tfrac{195}{4}a^5+\tfrac{85}{2}a^4-\tfrac{135}{8}a^3+\tfrac 32 a^2+\tfrac
12 a)x^3+\\
(a^6 -4a^5+6a^4-4a^3+a^2)x^2
\end{align*}
is a majorant of $h_a$, which in turn shows that
$$
h_a\;\leq\; 6.356\,a - 4.375
$$
for large $a$. No closed form upper bound is available for analyses of
depth 2 and 3, but majorants are readily computable for $a$ up to $8$
as indicated  Figure \ref{lego}. Again, these appear to be
approximately affine for large $a$. 

Using the Monte Carlo methods described in \cite{abeil} and \cite{abeil1} we have
produced estimates of $h_a$ for $a$ up to $8$. Strikingly, our
estimates in each case have the form $k_a\frac{a^a}{(a-1)^{a-1}}$, 
with $k_a$ between $1.238$ and $1.264$. This makes it tempting to
speculate that
$$
h_a = \frac{5\,a^a}{4(a-1)^{a-1}}\,.
$$
In
particular, it does not seem unlikely that $h_2=5$. Our best current estimate,
achieved by a least square fitting of a function of the form $AH^nn^C$
with Monte Carlo estimates for $L^2_{16},\dots,L^2_{20}$ yields
$H=5.0012$.

\begin{figure}
\begin{center}
\includegraphics[width=10cm]{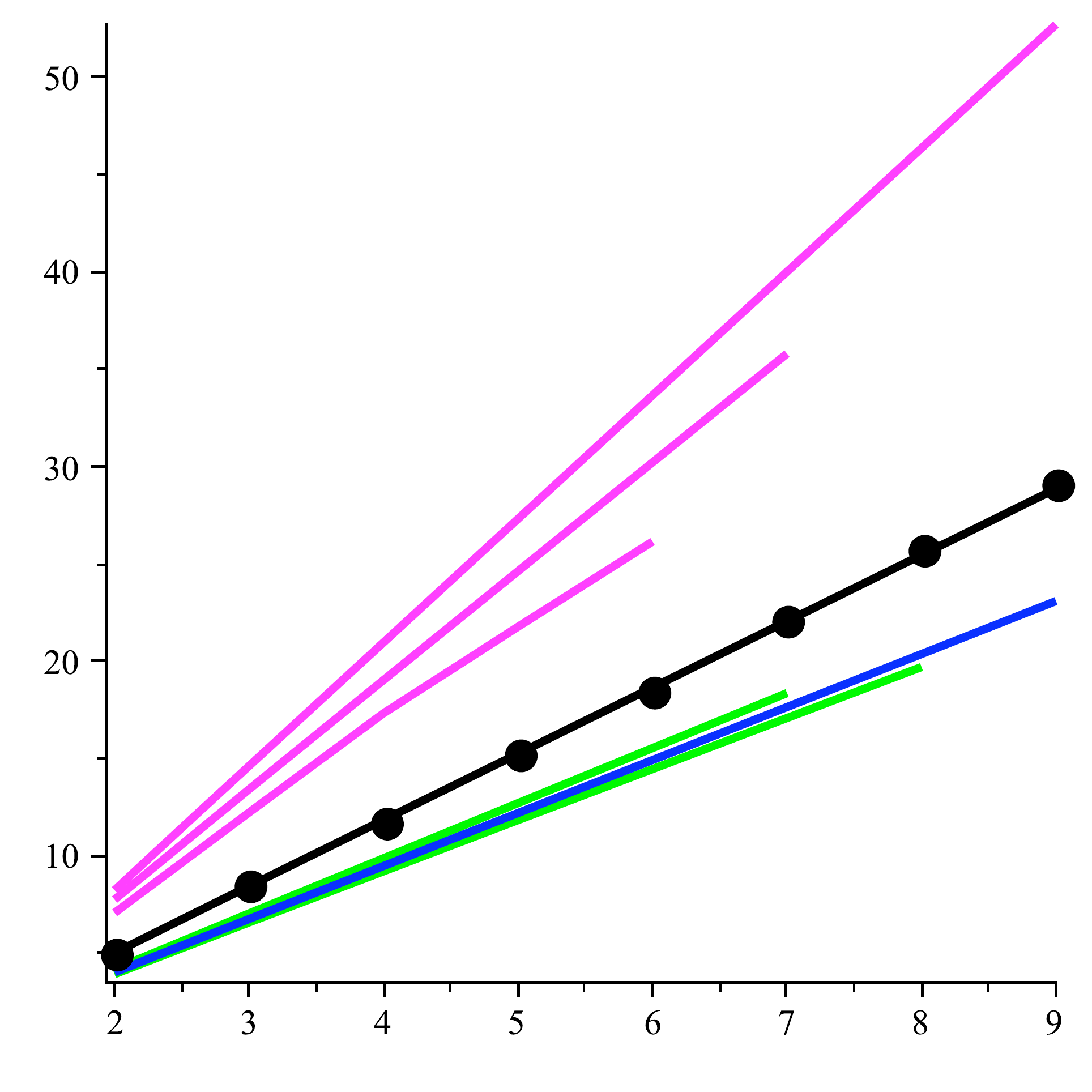}\label{lego}
\end{center}
\caption{Black: our conjectural formula for $h_a$ and the estimated
values. Magenta: Upper bounds based on the Klarner-Rivest
method (levels 1, 2 and 3). Blue: Lower bound from pyramids. Green: Lower bounds from
counting fat buildings (levels 6 and 8).}
\end{figure}

\end{document}